\newcommand{\px}{\frac{\partial}{\partial x}}
\newcommand{\py}{\frac{\partial}{\partial y}}
\newcommand{\pz}{\frac{\partial}{\partial z}}
\newcommand{\C}{\mathbb C}
\newcommand{\R}{\mathbb R}
\newcommand{\RP}{\mathbb R\mathrm{P}}
\newcommand{\N}{\mathbb N}
\newcommand{\F}{\mathcal F}
\renewcommand{\sl}{\mathfrak{sl}}
\newcommand{\st}{\mathfrak{st}}
\newcommand{\spg}{\mathfrak{sp}}
\newcommand{\gl}{\mathfrak{gl}}
\newcommand{\so}{\mathfrak{so}}
\newcommand{\rtt}{\rightthreetimes}
\newcommand{\g}{\mathfrak g}
\newcommand{\h}{\mathfrak h}
\newcommand{\p}{\mathfrak p}
\newcommand{\ag}{\mathfrak a}
\newcommand{\bg}{\mathfrak b}
\newcommand{\co}{\mathfrak{co}}
\newcommand{\su}{\mathfrak{su}}
\newcommand{\0}{\{0\}}
\newcommand{\al}{\alpha}
\newcommand{\bt}{\beta}
\newcommand{\tr}{\operatorname{tr}}
\newcommand{\id}{\operatorname{id}}
\newcommand{\codim}{\operatorname{codim}}
\newcommand{\ad}{\operatorname{ad}}
\newcommand{\Ind}{\operatorname{Ind}}
\newcommand{\Hom}{\operatorname{Hom}}
\newcommand{\gp}{(\g,\g_0)}
\newcommand{\Vp}{(V,V_0)}
\newcommand{\mpar}{\ \par}
\newcommand{\cF}{\mathcal F}
\newtheorem{thm}{Theorem}
\newtheorem{lem}{Lemma}
\newtheorem*{cor}{Corollary}
\theoremstyle{definition}
\newtheorem{df}{Definition}
\newtheorem{ex}{Example}
\theoremstyle{remark}
\newtheorem{rem}{Remark}
\title[Three-dimensional spaces with non-solvable
groups]{Three-dimensional homogeneous spaces with
  non-solvable transformation groups}
\author{Boris Doubrov}
\address{Faculty of Mathematics and Mechanics, Belarusian State University, Nezavisimosti ave. 4, 220050 Minsk, Belarus}
\email{doubrov@bsu.by}
\subjclass[2010]{Primary: 17B66; Secondary: 17B56, 17B35, 57S20, 57S25}
\keywords{Lie algebras of vector fields, homogeneous spaces, invariant folations, Lie algebra cohomology}
\begin{document}
\begin{abstract}
We classify all transitive actions of Lie algebras of vector fields on $\C^3$ and $\R^3$ up to a local equivalence and discuss why this classification can not be extended in general to the solvable case. The main technical tool is the structure
of one-dimensional invariant foliations on homogeneous spaces.
\end{abstract}

\maketitle

\section{Introduction}

Classification of homogeneous spaces in low dimensions is a
classical problem which goes back to Sophus Lie, who provided
local classification of complex and real homogeneous spaces in
dimensions 1 and 2. He also classified three classes of complex
three-dimensional spaces, namely:
\begin{itemize}
\item primitive actions, i.e.~actions without non-trivial invariant
  foliations;
\item actions without invariant one-dimensional foliations;
\item actions without invariant two-dimensional foliations.
\end{itemize}
For the rest of three-dimensional actions he provided a certain
classification algorithm and claimed (as Fermaut) that only luck
of book space does not allow him to list the complete
classification. Unfortunately, this classification was never
published by him, but some parts of it appear in giant archive of
handwritten papers, left after him~\cite{lie:arch}. As it is shown
in this paper, it is unlikely that Sophus Lie really had
explicitly written down this classification since for several
cases it is hardly possible.

However, a large class of three-dimensional homogeneous spaces,
namely, homogeneous spaces with non-solvable transformation group,
allows explicit parametrization, and the main goal of this paper
is to provide this classification. The main idea is to consider
homogeneous spaces with one-dimensional invariant foliations as
one-dimensional bundles over two-dimensional homogeneous spaces.
As it was shown in~\cite{1dimdist} it can be assumed with minor
exceptions that this bundle has a structure of vector bundle, and
its transformation group is an extension of a transformation group
of underlying two-dimensional space by means of a certain subspace
in the space of all sections of this vector bundle. 

Notice also, that the first attempt to classify all
three-dimensional homogeneous spaces with non-solvable
transformation groups was made by Morozov and his student
in~\cite{moroz}. Unfortunately, his methods were rather
complicated, and, as a result, a series of actions is missing in
this work. Moreover, Morozov couldn't manage to provide necessary
and sufficient conditions for different actions in his work to be
equivalent. Thus, his classification lists contain equivalent
actions, while in one case the essential parameter is missing.

The paper is organized as follows. In
Section~\ref{sec:1dim} we briefly recall the results of the
work~\cite{1dimdist}, where the structure of invariant
one-dimensional foliations on homogeneous spaces is described. In Section~\ref{s:2}
we summarize the results of Sophus Lie~\cite{lie1} on the classification
of transitive Lie algebras of vector fields in the three-dimensional
space that do not admit a one-dimensional invariant folation. This includes
all primitive actions, but but also those that admit invariant 2-dimensional
foliations with primitive actions on the fibers.

In Section~\ref{s:classif} we carry out the full classification of transitive actions
on~$\C^3$ with non-solvable transformation groups that admit 1-dimensional
foliation. We analyze the difference between real and complex case and
provide the similar classification of actions on $\R^3$ on~$\R^3$ in Section~\ref{s:real}.

Finally, in Section~\ref{s:ntg} we provide one simple example of a class
of transitive nilpotent transformation groups that do not admit an explicit
parametrization by a finite set of parameters. In particular, this shows
why similar classification of solvable Lie algebras of vector fields is a
considerably more complex problem. The classification results are 
summarized in Appendix~A.

\section{One-dimensional invariant foliations on homogeneous spaces}
\label{sec:1dim}

\subsection{Constructions of invariant one-dimensional foliations}
In this subsection we describe local structure of one-dimensional
invariant foliations on homogeneous spaces. Let $M=G/G_0$ be a
homogeneous space of the Lie group $G$. Denote by $o$ the point
$eG_0$, and by $l_g$ the diffeomorphism of $M$ defined by the action
of the element $g\in G$. The \emph{isotropy action} of $G$ on $T_o(M)$
is a linear action defined by $g.v=d_o(l_g)(v)$ for $g\in G,\,v\in T_oM$.
It is well-defined and supplies the tangent space $T_oM$ with a
$G$-module structure.

Let $\g$ be the Lie algebra of the Lie group $G$ and $\g_0$ the
subalgebra of $\g$ corresponding to the subgroup $G_0$. Locally at
the point $o$, the homogeneous space $M$ is completely determined
by the pair $(\g,\g_0)$. We assume in the following that the
action of $G$ on $M$ is \emph{effective}, so that the identical element
is the only element in $G$ that acts trivially on $M$. This
implies that the pair $(\g,\g_0)$ is also effective, i.e., $\g_0$
does not contain any non-zero ideals of $\g$. The tangent space
$T_oM$ can be identified with the quotient space $\g/\g_0$, and
the isotropy action of $G$ on $T_oM$ with the adjoint action of
$G_0$ on $\g/\g_0$.

It is well-known that invariant distributions on $M$ are in
one-to-one correspondence with submodules of the $G$-module
$\g/\g_0$. Locally we may always assume that the subgroup $G_0$ is
connected. Hence all submodules of the $G_0$-module $\g/\g_0$ are
in one-to-one correspondence with submodules of the $\g_0$-module
$\g/\g_0$, where $x.(y+\g_0)=[x,y]+\g_0$, $x\in\g_0$, $y\in\g$.

Thus, the local description of one-dimensional invariant
foliations on homogeneous spaces is equivalent to the
description of triples $(\g,\g_0,W)$, where $(\g,\g_0)$ is an
effective pair of Lie algebras and $W$ is a one-dimensional
submodule of the $\g$-module $\g/\g_0$.

Let $N=H/H_0$ be an arbitrary $n$-dimensional homogeneous space. Let
us introduce several ways of constructing a new $(n+1)$-dimensional
homogeneous space supplied with a one-dimensional invariant foliation.

\begin{ex}\label{ex1}
 Let $G_0$ be a Lie subgroup of $H_0$ of codimension~1. We set
 $G=H$ and $M=G/G_0$. Then $\pi\colon M\to N$, $hG_0\mapsto hH_0$,
 is a fibre bundle with one-dimensional fibres. It is easy to
 see that these fibres form an invariant one-dimensional foliation
 on $M$.
\end{ex}

\begin{ex}\label{ex2}
  Let $\pi\colon M\to N$ be an $H$-invariant one-dimensional vector
  bundle, that is $\pi$ is a vector bundle and there exists an action
  of $H$ on $M$ such that
\begin{enumerate}
\item $\pi(h.p)=h.\pi(p)$ for all $h\in H$, $p\in M$;
\item the mapping $h\colon \pi^{-1}(q)\to\pi^{-1}(h.q)$ is linear for
all $h\in H$, $q\in N$.
\end{enumerate}

Notice that $H$ acts non-transitively on $M$, since, for example,
the set of all zero vectors forms an orbit. But this action can be
extended to a transitive one in the following way. Let $\F(\pi)$
be the space of all smooth sections of $\pi$. Consider the natural
action of $H$ on $\F(\pi)$:
\[
(h.\alpha)(q)=h.\alpha(h^{-1}.q),\quad \alpha\in \F(\pi),\ h\in
H,\ q\in N.
\]

Let $V$ be an arbitrary finite-dimensional submodule of the $H$-module
$\F(\pi)$ and let $G=H\rtt V$ be the semidirect product of $H$ and
$V$. We define the action of $G$ on $M$ in the following way:
\[
(h,\alpha).p = h.p + \alpha(h.\pi(p)),\quad (h,\alpha)\in G,\ p\in
M.
\]
It is easy to see that this action is well-defined and transitive
on $M$, if $V$ contains any non-zero section. The stationary
subgroup $G_0$ of this action at the point $(o,0)$ is equal to
$H_0\rtt V_0$. This action is effective and preserves the fibres
of the projection $\pi$, which form an invariant one-dimensional
foliation on $M$.

Instead of semidirect product $H\rtt V$ we can take an arbitrary
extension $G$ of $H$ by means of the abelian subgroup $V$:
\[
\begin{CD}
\0@>>> V @>\alpha>> G @>\beta>> H@>>>\{e\},
\end{CD}
\]
and an action of $G$ on $M$ such that
\begin{enumerate}
\item the following diagram is commutative:
\[
\begin{CD}
\0\times M@>>>V\times M@>\alpha\times\id>>G\times M@>\beta\times\pi>>H\times N@>>>\{e\}\times N\\
     @VVV        @VVV                       @VVV                         @VVV            @VVV  \\
    M@>\id>>M@>\id>>M@>\pi>>N@>\id>>N,
\end{CD}
\]
(here all vertical arrows correspond to the actions of Lie groups
on manifolds and $V$ acts on $M$ by parallel displacements on the
fibers).
\item the mapping $g\colon \pi^{-1}(q)\to\pi^{-1}(\beta(g).q)$ is linear for all
$g\in G,\ q\in N$.
\end{enumerate}
Again, fibers of the projection $\pi$ form an invariant
one-dimensional foliation.
\end{ex}

\begin{ex}\label{ex3}
As in the previous example, let $\pi\colon M\to N$ be an invariant one-dimensional
vector bundle, and let $V$ be a finite-dimensional submodule of
the $G$-module $\F(\pi)$. We let $G=(H\times\R^*)\rtt V$, where
the elements of $\R^*$ act on $V$ by scale transformations, and
define the action of $G$ on $M$ in the following way:
\[
(h,x,\alpha).p = h.p+ x\,\alpha(h.\pi(p)),\quad (h,x,\alpha)\in G,\
p\in M.
\]
As in the previous example, the fibers of $\pi$ form an
one-dimensional invariant foliation.
\end{ex}

\begin{ex}\label{ex4}
Let $G = H\times PSL(2,\R)$, $M=N\times\RP^1$, and the action of
$G$ on $M$ is a direct product of the action of $H$ on $N$ and the
action of $PSL(2,\R)$ on $\RP^1$ by projective transformations.
This action is transitive and preserves fibres of the projection
$\pi\colon M\to N$, $(q,p)\mapsto q$. The fibres of this
projection form an invariant one-dimensional foliation.

Notice that if we take other one-dimensional homogeneous space
instead of $\RP^1$ we get particular cases of the two previous
examples.
\end{ex}

The main result of this section is the following
\begin{thm}\label{thm:1}
Let $M=G/G_0$ be a homogeneous space with a fixed
one-dimensional invariant foliation. Then there is a uniquely
determined homogeneous space $N=H/H_0$ such that $\dim N=\dim M-1$
and $M$ is locally equivalent to one of the spaces constructed in
Examples~\ref{ex1}--\ref{ex4}.
\end{thm}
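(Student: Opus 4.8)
The plan is to translate the problem into the language of effective pairs $(\g,\g_0,W)$ as set up above, where $W$ is the one-dimensional submodule of $\g/\g_0$ tangent to the foliation, and then to construct the base $N=H/H_0$ and recognize which of the four examples applies according to how the $\g_0$-action behaves on $W$. First I would consider the preimage $\h_0$ of $W$ under the projection $\g\to\g/\g_0$; this is a subalgebra of $\g$ containing $\g_0$ as a codimension-one subspace, because $W$ is a submodule and hence $[\g_0,\h_0]\subseteq\h_0$, while $[\h_0,\h_0]\subseteq\h_0$ follows from $\dim(\h_0/\g_0)=1$ together with $[\g_0,\g_0]\subseteq\g_0$. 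The quotient $N=G/H_0$ (locally, $H/H_0$ with a suitable $H$) is then an $(\dim M-1)$-dimensional homogeneous space, and the natural projection $M\to N$ has the leaves of the foliation as fibres. Uniqueness of $N$ is immediate: the foliation determines $W$, hence $\h_0$, hence the pair $(\g,\h_0)$ up to local isomorphism.

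Next I would analyze the structure of the fibre and the way $G$ acts along it. The fibre is the one-dimensional homogeneous space $H_0/G_0$ (or rather the leaf through $o$, acted on by the subgroup generated by $\h_0$), and one-dimensional homogeneous spaces are classically classified: up to local equivalence they are $\R$ with translations, $\R$ with translations and dilations (the affine line), or $\RP^1$ with the projective action of $PSL(2,\R)$ — equivalently $\R_{>0}$, etc. The case where the fibre is zero-dimensional in the sense that $G_0$ already has codimension one in $H_0$ but $H_0$ acts on the fibre through a \emph{point} stabilizer of positive dimension is exactly Example~\ref{ex1}: here $G=H$ and no enlargement is needed. In the remaining cases the stabilizer of the fibre direction is smaller, and one extracts from $\g$ an ideal-like piece $V$ consisting of those vector fields that are vertical (tangent to the fibres) — concretely, $V=\ker(\g\to(\text{vector fields on }N))$ — which is an abelian ideal of $\g$ acting on $M$ by fibrewise translations, and $\g/V$ maps to vector fields on $N$. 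Choosing a vector-bundle structure on $M\to N$ compatible with the fibrewise-affine action (possible with the minor exceptions noted, by the cited work \cite{1dimdist}), the ideal $V$ becomes a finite-dimensional submodule of $\F(\pi)$, and the extension $\0\to V\to\g\to\h\to\0$ with the fibrewise-linear action is precisely the data of Example~\ref{ex2}; if in addition $\g/V$ contains a dilation field scaling the fibres one lands in Example~\ref{ex3}, and if the fibre action is the full $\mathfrak{sl}(2,\R)$ on $\RP^1$ splitting off as a direct factor one is in Example~\ref{ex4}.

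The main obstacle I anticipate is the case analysis at the level of the fibre action together with the passage from an abstract one-dimensional fibre bundle to a \emph{vector} bundle: a priori the fibres are just affine or projective lines, and one must show that — outside the explicitly listed exceptions — the $G$-action can be conjugated so that it preserves a zero section and acts linearly on fibres, which is what makes $V$ a submodule of sections rather than of affine sections. This is exactly the content imported from \cite{1dimdist}, so in the write-up I would invoke that result to reduce to the vector-bundle situation and then verify, by inspecting the finitely many possibilities for the one-dimensional fibre stabilizer and for whether the dilation and the $\spg(2,\R)$-factor are present, that $(\g,\g_0,W)$ is isomorphic to the triple arising from one of Examples~\ref{ex1}--\ref{ex4}. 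The remaining verifications — that each construction indeed yields an effective transitive pair with the prescribed foliation, and that the recognition is exhaustive — are routine bookkeeping on the list of one-dimensional homogeneous spaces and their automorphisms.
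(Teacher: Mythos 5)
Your opening reduction matches the paper's: you pass to the triple $(\g,\g_0,W)$, form the minimal overalgebra $\p=\pi^{-1}(W)$ (your $\h_0$), and take $N$ to be the corresponding quotient, with uniqueness of $N$ following as you say. But the heart of the paper's proof is missing from your proposal, and the step you substitute for it is wrong. Your $V=\ker(\g\to\operatorname{Vect}(N))$ is exactly the largest ideal $\ag$ of $\g$ contained in $\p$, and you assert that it is \emph{an abelian ideal acting by fibrewise translations}. That is false in two of the four cases: $\ag$ can also be isomorphic to $(k\id)\rtt k^n$ (non-abelian, containing the fibre dilation inside the ideal itself, not in $\g/V$ as you suggest) or to $\sl(2,k)$. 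The paper's argument establishes the exact trichotomy for $\ag$ by first showing (Lemma~\ref{lem2}) that the pair $(\ag,\ag\cap\g_0)$ is \emph{almost effective} of codimension one, and then classifying all such pairs (Corollary to Lemma~\ref{lem1}) using Frattini-subalgebra theory and Stitzinger's structure theorem. Your appeal to the classical list of one-dimensional homogeneous spaces does not do this job, because $(\ag,\ag_0)$ is only almost effective: the action of $\ag$ on a single leaf generally has a large kernel, so knowing that the effective quotient is translations, affine, or projective says nothing about the structure of $\ag$ itself (e.g.\ it does not rule out $\ag$ being a big nilpotent or mixed ideal). Without this classification the case analysis ``Example 1 / 2 / 3 / 4'' is not justified as exhaustive.

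Two further points. First, in the abelian case the paper does not reduce to a semidirect product: $(\g,\g_0)$ is in general a non-split extension of the $(\h,\h_0)$-module $(V,V_0)$, which is why Example~\ref{ex2} is formulated for arbitrary extensions and why the cohomology spaces $H^2(\g,\g_0,V,V_0)$ are introduced; in the dilation case one must additionally prove that all extensions are trivial (done in the paper via a grading argument and Fuks's vanishing theorem). Your write-up conflates these. Second, you propose to import the passage to a vector-bundle structure from~\cite{1dimdist}, but Theorem~\ref{thm:1} \emph{is} the result of~\cite{1dimdist} being proved here, so that citation cannot carry the key step; the vector-bundle structure in cases II and III comes out of the algebraic analysis of $\ag$ (the effective $(\h,\h_0)$-module $(V,V_0)$ of codimension one), not the other way around.
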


The proof is based on several purely algebraic results presented in
the next subsection.

\subsection{Almost effective pairs of Lie algebras}
Let us recall that an ideal of the Lie algebra $\g$ is called
\emph{characteristic}, if it is stable under all derivations of $\g$.
We call the pair $(\g,\g_0)$ \emph{almost effective} if $\g_0$ does
not contain proper characteristic ideals of the Lie algebra $\g$, and
\emph{maximal} if $\g_0$ is a maximal subalgebra of $\g$. Maximal
pairs correspond to primitive homogeneous spaces, which do not
preserve (even locally) any invariant foliations.

\begin{lem}\label{lem1}
Any maximal almost effective pair $(\g,\g_0)$ over the field
$k=\R$ or $\C$ has one of the following forms:
\begin{enumerate}
\item $\g=\ag\rtt (V\otimes k^n)$, $\g_0=\ag(V\otimes
k^{n-1})$, where $V$ is a faithful simple $\ag$-module and $k^n$
is a trivial $\ag$-module;
\item $\g$ is a semisimple Lie algebra and $\g_0$ is its maximal
effective subalgebra.
\end{enumerate}
\end{lem}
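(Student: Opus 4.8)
The plan is to follow the classical structure theory of a Lie algebra $\g$ via its radical, but carried out relative to the subalgebra $\g_0$ and the maximality/almost-effectiveness constraints. First I would fix a Levi decomposition $\g=\rg\rtt\st$, where $\rg$ is the radical and $\st$ a semisimple Levi subalgebra. If $\rg=\0$ then $\g$ is semisimple, and maximality of $\g_0$ together with almost effectiveness puts us in case (2), so the whole point is to analyze the case $\rg\neq\0$ and show it forces case (1). Let $\n$ be the nilradical of $\g$; since $\n$ is a characteristic ideal, almost effectiveness gives $\n\not\subseteq\g_0$, hence $\n+\g_0=\g$ by maximality. The first key step is to argue that $\g_0\cap\n$ is ``small'': one shows that $\g_0+[\g,\rg]$ is a subalgebra containing $\g_0$, so by maximality it is either $\g_0$ or $\g$; analyzing both alternatives (and iterating along the lower central series of $\rg$) one extracts that the ``extra'' directions of $\n$ beyond $\g_0$ form an abelian ideal $V'$ of $\g$ on which the action has a strong irreducibility property.

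The second, central step is to isolate a minimal (nonzero) ideal $A\subseteq\n$ of $\g$ contained in $\n$ but not in $\g_0$; then $A+\g_0=\g$ by maximality, so $A/(A\cap\g_0)\cong\g/\g_0$ as vector spaces and, more importantly, $A$ is an abelian ideal (being a subalgebra of $\n$ that is minimal and hence must have trivial derived algebra, otherwise $[A,A]$ would be a smaller nonzero ideal). I would then show that $\g_0$ acts on $A$ and that, by maximality of $\g_0$ and the fact that $A+\g_0=\g$, the stabilizer $\g_0\cap A$ of the line structure forces $A$ to decompose as $V\otimes k^n$ with $A\cap\g_0 = V\otimes k^{n-1}$ for a simple module $V$ of the complementary part $\ag := $ (image of $\g_0$, or of a Levi-plus-torus piece acting on $A$). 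The decomposition $A\cong V\otimes k^n$ comes from the observation that $A$, as a $\g_0$-module, has $A\cap\g_0$ as a hyperplane submodule; the quotient is one-dimensional (it is $\g/\g_0$ with the isotropy action restricted to a line), and running over the $\g$-module structure, $A$ breaks into isotypic pieces each isomorphic to a fixed simple $\ag$-module $V$, with $k^n$ recording the multiplicity and the trivial action of $\ag$ on it reflecting that the complementary directions commute.

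The third step is to show $\g = \ag \rtt A$ with $A = V\otimes k^n$: one must check that the ideal $A$ has a complementary subalgebra $\ag$, that $\ag$ is (reductive, in fact) of the required form, and that $\g_0 = \ag \rtt (V\otimes k^{n-1})$; this uses that $\ag$ can be taken inside $\g_0$ (since $\g_0 \supseteq$ the stabilizer, which surjects onto $\ag$), and that no larger subalgebra sits between $\ag\rtt(V\otimes k^{n-1})$ and $\g$ other than $\g$ itself, which is where maximality is invoked one final time. Faithfulness of $V$ as an $\ag$-module is forced by effectiveness: any kernel would be an ideal of $\g$ contained in $\g_0$.

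I expect the main obstacle to be the second step — extracting the clean tensor-product description $A\cong V\otimes k^n$ and, simultaneously, pinning down that $\g_0\cap A$ is exactly a codimension-one submodule of the special form $V\otimes k^{n-1}$ rather than some other hyperplane. The subtlety is that maximality of $\g_0$ only controls subalgebras, not arbitrary submodules of $A$; one has to leverage the interaction between the Lie bracket (making $\ag\oplus(\text{submodule})$ a subalgebra for any $\ag$-submodule) and the module structure to conclude that the only $\ag$-invariant hyperplanes of $A$ complementing the $\g/\g_0$-line are of the stated form, which in turn forces the multiplicity space $k^n$ to carry the trivial $\ag$-action. Handling the case where $\ag$ itself is not semisimple but merely reductive (a torus factor may appear, cf.~Example~\ref{ex3}) requires a little extra care in separating the semisimple and central parts of the acting algebra.
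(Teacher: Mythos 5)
Your proposal follows a genuinely different route from the paper, but it has a real gap at exactly the point you flag as the ``main obstacle,'' and that gap is not a technicality. The paper does not work with a Levi decomposition and minimal ideals at all: it observes that a maximal subalgebra $\g_0$ must contain the Frattini subalgebra $\phi(\g)$ (the intersection of all maximal subalgebras), which is a characteristic ideal, so almost effectiveness forces $\phi(\g)=\0$; it then invokes Stitzinger's structure theorem for Lie algebras with trivial Frattini subalgebra, which says $\g=(\ag\rtt W)\times\bg$ with $W$ an \emph{abelian} ideal, $\ag$ \emph{reductive in} $\gl(W)$, and $\bg$ semisimple. Everything your sketch struggles to produce --- that the relevant ideal is abelian, that the acting algebra is reductive and acts faithfully, that there is no residual solvable part beyond $W$ and the centre of $\ag$ --- is delivered in one stroke by that theorem. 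After that, the paper's remaining steps are short: the isotypic components of the $\ag$-module $W$ are characteristic ideals, so all but one lie in $W_0=W\cap\g_0$ and hence $W=V\otimes k^n$, $W_0=V\otimes k^{n-1}$; then $\g_0$ is a graph over $\ag\times\bg$ twisted by a cocycle in $Z^1(\ag\times\bg,V)$, which vanishes in cohomology, and $\bg$ dies because it is a characteristic ideal contained in $\g_0$.

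Concretely, the steps in your outline that do not go through as written: (a) a minimal ideal $A\subseteq\n$ not contained in $\g_0$ controls only one simple constituent of the radical; it tells you nothing about the rest of $\n$ (why is all of $\n$ abelian? why does $\g_0$ contain exactly $V\otimes k^{n-1}$ of it? why is the radical not larger than $W$ plus the centre of $\ag$?). Your gesture at ``iterating along the lower central series of $\rg$'' is where a Frattini-type argument would have to be reconstructed by hand, and you do not carry it out. (b) Your argument that $A$ is abelian assumes $[A,A]$, if nonzero, violates minimality; but you chose $A$ minimal among ideals \emph{not contained in} $\g_0$, and $[A,A]$ could be a nonzero ideal lying inside $\g_0$ without being characteristic, which almost effectiveness does not exclude. (c) The step ``$\ag$ can be taken inside $\g_0$'' needs the vanishing of $H^1(\ag\times\bg,V)$ (or an equivalent splitting argument); you assert it but give no mechanism. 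None of these is fatal in principle, but filling them essentially amounts to reproving Stitzinger's theorem, so the proposal as it stands does not constitute a proof.
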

\begin{proof}
The proof is based on the theory of Frattini subalgebras of Lie
algebras. Let us recall that the \emph{Frattini subalgebra} of a
finite-dimensional Lie algebra $\g$ is defined to be the
intersection of all maximal subalgebras of $\g$ and is denoted by
$\phi(\g)$. It is clear that $\phi(\g)$ is stable under the group
of all automorphisms of $\g$ and hence is a characteristic ideal
of $\g$.

Since $\g_0$ is a maximal subalgebra of $\g$, we have $\g_0\supset
\phi(\g)$. Hence, $\g_0$ can be almost effective only if
$\phi(\g)=0$. Such Lie algebras were considered by
E.~Stitzinger~\cite{frat}. He proved (Theorem~4 of~\cite{frat})
that any finite-dimensional subalgebra $\g$ over a field of zero
characteristic with $\phi(\g)=\0$ has the form $(\ag\rtt W)\times
\bg$, where $W$ is an abelian ideal, $\ag$ is a reductive
subalgebra in $\gl(W)$, and $\bg$ is a semisimple ideal.

If $W=\0$ then $\ag=\0$ as well and $\g$ is semisimple. Since all
ideals of semisimple Lie algebra are characteristic, we see that
$\g_0$ is maximal effective subalgebra of~$\g$.

Assume now that $W\ne\0$. Since $\g_0$ is maximal, we see that
$\g_0+W=\g$. Hence, $W_0=W\cap\g_0$ is an ideal of $\g$ and, in
particular, is a submodule of the $\ag$-module $W$. Let $W=(V_1\otimes
k^{n_1})+\dots+ (V_r\otimes k^{n_r})$ be the decomposition of $W$ into
the sum of isotypic components of the $\ag$-module $W$. Then each of
this components is a characteristic ideal of $\g$, and $W_0$ contains
all these components apart from one of them. Hence, the $\ag$-module
$W$ has only one isotypic component: $W=V\otimes k^n$, and $W_0$
is equal to $V\otimes k^{n-1}$.

Consider now the projection $\pi\colon \g\to \g/W=\ag\times\bg$.
Since $\g_0$ is maximal, we see that $\pi(\g_0)=\ag\times\bg$ and
the subalgebra $\g_0$ has the form $\{ x+\al(x) \mid x\in
\ag\times\bg\} + W_0$, where $\al\in Z^1(\ag\times\bg, V)$. But it
is easy to see that $H^1(\ag\times\bg, V)=\0$ and we may assume
$\al=0$. Since $\bg$ is a characteristic ideal of $\g$, we see
that $\bg=0$ and the pair $(\g,\g_0)$ has the form (1) of the
Lemma.
\end{proof}
\begin{cor}
Any almost effective pair $(\g,\g_0)$ of codimension 1 has one the
following forms:
\begin{enumerate}[(i)]
\item $\g=k^n$, $\g_0=k^{n-1}$, $n\ge1$;
\item $\g=(k E_n)\rtt k^n$, $\g_0=(k E_n)\rtt k^{n-1}$, $n\ge1$;
\item $\g=\sl(2,k)$, $\g_0=\st(2,k)$.
\end{enumerate}
\end{cor}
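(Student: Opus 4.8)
The plan is to derive the Corollary from Lemma~\ref{lem1} by enumerating the maximal almost effective pairs of codimension~$1$ and then handling the non-maximal case. First I would observe that if $(\g,\g_0)$ has codimension~$1$, then $\g_0$ is automatically a maximal subalgebra: any subalgebra strictly between $\g_0$ and $\g$ would have dimension strictly between $\dim\g_0$ and $\dim\g$, which is impossible. Hence every almost effective pair of codimension~$1$ is in fact a maximal almost effective pair, and Lemma~\ref{lem1} applies directly.

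Next I would run through the two cases of Lemma~\ref{lem1} under the constraint $\dim\g/\g_0=1$. In case~(2), $\g$ is semisimple and $\g_0$ is a maximal effective subalgebra of codimension~$1$; a semisimple Lie algebra acting transitively and effectively on a $1$-dimensional space must be $\sl(2,k)$ with $\g_0$ a Borel subalgebra $\st(2,k)$ (the only effective codimension-one pair among semisimple algebras, since $\dim\g - \dim\g_0 = 1$ forces $\dim\g \le 3$ among semisimple candidates, and $\sl(2,k)$ is the only simple one of dimension~$3$ while $\dim\g_0 = 2$ is the Borel). This gives form~(iii). In case~(1), $\g=\ag\rtt(V\otimes k^n)$ and $\g_0=\ag\rtt(V\otimes k^{n-1})$ with $V$ a faithful simple $\ag$-module; since $\codim\g_0 = \dim V$, effectiveness of codimension~$1$ forces $\dim V = 1$, so $V=k$, and then $\ag\subseteq\gl(1,k)=k$ is either $\0$ or all of $k$. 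The case $\ag=\0$ gives $\g=k^n$, $\g_0=k^{n-1}$, which is form~(i); the case $\ag=k$ acting by scalars gives $\g=(kE_n)\rtt k^n$, $\g_0=(kE_n)\rtt k^{n-1}$, which is form~(ii).

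The one point requiring a little care — and the main (minor) obstacle — is the semisimple subcase: one must argue that there is no semisimple pair of codimension~$1$ other than $(\sl(2,k),\st(2,k))$. I would handle this by noting that a transitive effective action on a one-dimensional space embeds $\g$ into the Lie algebra of vector fields on the line (or on $k$), and by Lie's classification the only finite-dimensional transitive Lie algebras of vector fields on the one-dimensional space are $k$ (translations), $k\rtt k$ (affine), and $\sl(2,k)$ (projective); the only semisimple one is $\sl(2,k)$, whose stabilizer of a point is the $2$-dimensional subalgebra $\st(2,k)$. Assembling the three forms~(i), (ii), (iii) obtained from the two cases then completes the proof; conversely each of these pairs is visibly almost effective of codimension~$1$, so the list is exhaustive.
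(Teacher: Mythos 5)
Your proof is correct and follows essentially the same route as the paper: reduce to Lemma~\ref{lem1} (the maximality of $\g_0$ being automatic in codimension one), force $\dim V=1$ in the non-semisimple case so that $\ag$ is $\0$ or $k$, and in the semisimple case invoke the fact that $\sl(2,k)$ is the only semisimple finite-dimensional Lie algebra of vector fields on the line. The only blemish is your parenthetical claim that codimension one ``forces $\dim\g\le 3$ among semisimple candidates'' --- that inference is not valid on its own, but it is harmless since you immediately replace it with the correct argument via Lie's classification of transitive vector-field algebras in one variable.
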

\begin{proof}
If the Lie algebra $\g$ is not semisimple then the pair
$(\g,\g_0)$ has form~(1) of the Lemma with $n\ge0$. Since the
codimension of $\g_0$ is equal to the dimension of $V$, we see
that $\dim V=1$ and either $\ag=\0$ or $\ag=k^*$. In the first
case the pair $(\g,\g_0)$ has form~(i), and in the second case it
is if of the form~(ii).

If $\g$ is semisimple, then $\g_0$ is effective and the pair
$(\g,\g_0)$ corresponds to some one-dimensional homogeneous space.
But the only semisimple finite-dimensional subalgebra in the Lie
algebra of vector fields on the line is $\sl(2,k)$, and the pair
$(\g,\g_0)$ in this case has form~(iii).
\end{proof}

\begin{lem}\label{lem2}
Let $(\g,\g_0)$ be an effective pair and let $\p$ be a minimal
overalgebra of the subalgebra $\g_0$. Suppose $\ag$ the largest
ideal of $\g$ lying in $\p$, and $\ag_0=\ag\cap\g_0$. Then either
$\ag=\0$ and the pair $(\g,\p)$ is also effective, or the pair
$(\ag,\ag_0)$ is almost effective and has the same codimension as
the pair $(\h,\g_0)$.
\end{lem}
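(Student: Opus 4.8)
The plan is to play the minimality of $\p$ off against the effectivity of $(\g,\g_0)$, using one elementary observation: any characteristic ideal of an ideal $\ag$ of $\g$ is automatically an ideal of $\g$, because for each $x\in\g$ the operator $\ad x$ restricts to a derivation of $\ag$.

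First I would note that, since $\ag$ is an ideal of $\g$, the sum $\ag+\g_0$ is a subalgebra of $\g$, and it clearly satisfies $\g_0\subseteq\ag+\g_0\subseteq\p$. As $\p$ is a minimal overalgebra of $\g_0$, only two possibilities remain: $\ag+\g_0=\g_0$ or $\ag+\g_0=\p$. In the first case $\ag\subseteq\g_0$; but $\ag$ is an ideal of $\g$ contained in $\g_0$, so effectivity of $(\g,\g_0)$ forces $\ag=\0$. Since $\ag$ is by construction the largest ideal of $\g$ contained in $\p$, this says precisely that $\p$ contains no nonzero ideal of $\g$, i.e. $(\g,\p)$ is effective, which is the first alternative of the lemma. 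Conversely, if $\ag\ne\0$ then $\ag\subseteq\g_0$ is impossible, so we must be in the second case $\ag+\g_0=\p$; thus the dichotomy in the statement corresponds exactly to the split $\ag=\0$ versus $\ag\ne\0$.

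Next, in the case $\ag+\g_0=\p$ the codimension claim is just a dimension count:
\[
\codim_\ag\ag_0=\dim\ag-\dim(\ag\cap\g_0)=\dim(\ag+\g_0)-\dim\g_0=\dim\p-\dim\g_0=\codim_\p\g_0 .
\]
It then remains to check that $(\ag,\ag_0)$ is almost effective. For this I would take an arbitrary characteristic ideal $\mathfrak c$ of $\ag$ with $\mathfrak c\subseteq\ag_0$. For every $x\in\g$ the restriction $\ad x|_\ag$ is a derivation of $\ag$, so $\mathfrak c$, being stable under all derivations of $\ag$, satisfies $[\g,\mathfrak c]\subseteq\mathfrak c$. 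Hence $\mathfrak c$ is an ideal of $\g$ lying in $\ag_0\subseteq\g_0$, and effectivity of $(\g,\g_0)$ yields $\mathfrak c=\0$, as required.

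I do not expect a real obstacle here: the whole argument is short, and each step is routine — the subalgebra check for $\ag+\g_0$, the two-way case split forced by minimality of $\p$, the dimension identity, and the standard remark that derivations of $\ag$ induced by $\ad\g$ preserve characteristic ideals. The only point needing care is bookkeeping the case split so that the two conclusions of the lemma line up precisely with $\ag=\0$ and $\ag\ne\0$, and keeping track that the displayed identity relies on $\ag+\g_0=\p$, which is exactly what the second case provides.
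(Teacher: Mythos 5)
Your proof is correct and follows essentially the same route as the paper's: the characteristic ideal of $\ag$ becomes an ideal of $\g$ inside $\g_0$ (forcing it to vanish by effectivity), and minimality of $\p$ gives $\ag+\g_0=\p$, from which the codimension equality is a dimension count. You merely spell out two points the paper leaves implicit — why $\ad x|_\ag$ preserves characteristic ideals, and why $\ag=\0$ makes $(\g,\p)$ effective — so no further comment is needed.
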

\begin{proof}
Suppose that $\ag\ne\0$. Let $\bg$ be a proper characteristic
ideal of $\ag$ lying in $\ag_0$. Then $\bg$ is also an ideal of
the Lie algebra $\g$ lying in $\g_0$, which contradicts the
assumption that the pair $(\g,\g_0)$ is effective. This proves
that the pair $(\ag,\ag_0)$ is almost effective.

Notice that $\g_0+\ag=\p$, since $\g_0$ does not contain $\ag$ and
$\g_0+\ag$ is an overalgebra of $\g_0$ lying in $\p$. Therefore
the space $\ag_0=\ag\cap\g_0$ has the same codimension in $\ag$ as
$\g_0$ is~$\p$.
\end{proof}
\begin{cor}
Let $\p$ be an overalgebra of $\g_0$ such that $\codim_{\h}
\g_0=1$. Then either it is effective subalgebra in $\g$ or the
corresponding pair $(\ag,\ag_0)$ has one of the forms described in
Corollary to Lemma~\ref{lem1}
\end{cor}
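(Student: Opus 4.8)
The plan is to deduce this at once from Lemma~\ref{lem2} and the Corollary to Lemma~\ref{lem1}, after one small preliminary observation. First I would note that the hypothesis $\codim_{\p}\g_0=1$ forces $\g_0$ to be a \emph{maximal} subalgebra of $\p$: a subalgebra strictly between $\g_0$ and $\p$ would in particular be a linear subspace lying strictly between them, which is impossible in codimension one. Consequently $\p$ is a minimal overalgebra of $\g_0$ in the sense required by Lemma~\ref{lem2}, so that lemma applies to this $\p$ (the standing assumption is that $(\g,\g_0)$ is effective).

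Next I would invoke Lemma~\ref{lem2} directly. Let $\ag$ be the largest ideal of $\g$ contained in $\p$ and put $\ag_0=\ag\cap\g_0$. The lemma yields the dichotomy: either $\ag=\0$, in which case the pair $(\g,\p)$ is effective, i.e.\ $\p$ is an effective subalgebra of $\g$; or $\ag\neq\0$, in which case $(\ag,\ag_0)$ is almost effective and $\codim_{\ag}\ag_0$ equals $\codim_{\p}\g_0=1$.

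It remains to treat the second case, where I would simply quote the Corollary to Lemma~\ref{lem1}: an almost effective pair of codimension~$1$ must be one of $(k^n,k^{n-1})$, $\bigl((kE_n)\rtt k^n,\,(kE_n)\rtt k^{n-1}\bigr)$, or $(\sl(2,k),\st(2,k))$. This is exactly the conclusion asserted for $(\ag,\ag_0)$. There is no genuinely hard step here — the statement is a bookkeeping composition of the two preceding results — so the only point that deserves care is the codimension accounting inside Lemma~\ref{lem2}: one must read its last sentence as $\codim_{\ag}\ag_0=\codim_{\p}\g_0$, so that the codimension-one hypothesis of the Corollary to Lemma~\ref{lem1} is actually met and the classification can be applied.
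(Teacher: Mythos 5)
Your argument is correct and is exactly the reasoning the paper intends: the paper states this corollary without proof as an immediate consequence of Lemma~\ref{lem2} combined with the Corollary to Lemma~\ref{lem1}, and your two observations (that codimension one forces $\p$ to be a minimal overalgebra of $\g_0$, and that the codimension transfer in Lemma~\ref{lem2} puts $(\ag,\ag_0)$ in the codimension-one case) are precisely the bookkeeping needed. Nothing is missing.
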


\subsection{Modules over pairs of Lie algebras}
An algebraic version of invariant vector bundles over homogeneous
spaces and modules of their sections is given by the notion of a
\emph{module over a pair of Lie algebras}.

\begin{df}
Let $(\g,\g_0)$ be a pair of Lie algebras. A
\emph{$(\g,\g_0)$-module} is a pair $(V,V_0)$ where $V$ is a
$\g$-module and $V_0$ is a subspace in $V$ stable under the action
of $\g_0$. A module $(V,V_0)$ is said to be \emph{effective}, if
$V_0$ does not contain any non-zero submodules of the $\g$-module
$V$.
\end{df}

By codimension of $(V,V_0)$ we mean the codimension of $V_0$ in
$V$, and we call the pair $(V,V_0)$ finite-dimensional if so is
$V$.

Let $G/G_0$ be effective homogeneous space and let $(\g,\g_0)$ be
the corresponding pair of Lie algebras. Since we are interested in
local equivalence of homogeneous spaces we may always suppose that
$G$ and $G_0$ are connected. It is well-known that all
finite-dimensional invariant vector bundles $\pi\colon E\to M$ are
described by finite-dimensional $G_0$-modules. Indeed, to any such
invariant vector bundle $\pi\colon E\to M$ we may correspond the
$G_0$-module $E_o$. Conversely, having such $G_0$-module, we may
reconstruct the whole bundle $E$ by the formula $E = (G\times E_0)
/ G_0$ where $G_0$ acts on $G\times E_0$ as follows:
\[
g_0.(g,v)=(gg_0^{-1},g_0.v),\quad g_0\in G_0, g\in G, v\in E_o.
\]
The projection $\pi$ maps the equivalence class of the point
$(g,v)$ to $gG_0$ in $G/G_0$. Moreover, the space $\F(\pi)$ of all
sections of $\pi$ also has a nice interpretation in terms of
$G_0$-module $E_o$. Namely, $\F(\pi)$ can be identified with the
induced $G$-module defined by
\begin{equation}\label{eq:1}
\Ind_G(G_0,E_o)=\{\phi\in\Hom(G,E_0)\mid
(\phi(gg_0^{-1})=g_0.\phi(g)\ \forall g_0\in G_0\}.
\end{equation}

We call the subspace $V\subset \F(\pi)$ \emph{non-degenerate}, if
it acts transitively on fibers of $\pi$. Define the subspace
$V_0\subset V$ as
\[
V_0=\{\alpha \in V \mid \alpha(o)=0\}.
\]
\begin{lem}\label{lem0}
1. The subspace $V_0$ is stable under the action of $G_0$ on $V$
and contains no non-zero submodules of the
$G_0$-module $V$.

2.  The $G_0$-module $V/V_0$ is naturally isomorphic to the
$G_0$-module $E_o$.
\end{lem}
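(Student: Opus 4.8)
The plan is to prove both statements directly from the definitions, using the description of $\F(\pi)$ as the induced module $\Ind_G(G_0,E_o)$ in~\eqref{eq:1}, together with the evaluation-at-$o$ map. First I would set up the key linear map: for $\alpha\in\F(\pi)$ viewed as $\phi\colon G\to E_o$ satisfying $\phi(gg_0^{-1})=g_0.\phi(g)$, define the evaluation $\mathrm{ev}_o\colon\F(\pi)\to E_o$, $\alpha\mapsto\alpha(o)=\phi(e)$. Restricted to the finite-dimensional submodule $V\subset\F(\pi)$ this gives a linear map $\mathrm{ev}_o\colon V\to E_o$ whose kernel is precisely $V_0$ by definition. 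So both parts of the lemma amount to understanding this map, and the crux is to show it is $G_0$-equivariant and, when $V$ is non-degenerate, surjective.

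For part~1, I would first check that $V_0$ is $G_0$-stable: if $\alpha(o)=0$ and $g_0\in G_0$, then $(g_0.\alpha)(o)=g_0.\alpha(g_0^{-1}.o)=g_0.\alpha(o)=0$, since $g_0$ fixes the base point $o$ and acts linearly on the fiber $\pi^{-1}(o)=E_o$ — here I use that the $G$-action on $\F(\pi)$ is $(g.\alpha)(q)=g.\alpha(g^{-1}.q)$ and that $\pi^{-1}(o)$ is preserved by $G_0$. This shows $\mathrm{ev}_o$ intertwines the $G_0$-action on $V$ with the isotropy (linear) action of $G_0$ on $E_o$. Next, to see that $V_0$ contains no non-zero $G_0$-submodule $U$: any $\alpha\in U$ would then satisfy $\alpha(o)=0$, but $G_0$-invariance of $U$ forces $(g.\alpha)(o)=0$ for all $g$ that can be written appropriately — more carefully, since $M=G/G_0$ is homogeneous, for any point $q=g.o$ we have $\alpha(q)=\alpha(g.o)=g.((g^{-1}.\alpha)(o))$, and $g^{-1}.\alpha$ need not lie in $U$. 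So the cleaner argument is: a $G_0$-submodule of $V$ lying in $V_0$ extends, via the $G$-action, to a $G$-submodule of $V$; and that $G$-submodule, being spanned by translates $g.\beta$ of sections vanishing at $o$, gives sections $g.\beta$ with $(g.\beta)(g.o)=g.\beta(o)=0$, i.e.\ a $G$-submodule of $\F(\pi)$ all of whose sections vanish identically. Since the only such section is $0$, the submodule is $\0$. The subtlety worth stating explicitly is why a $G_0$-submodule of the ambient $G$-module $V$ spans a $G$-submodule lying in $V_0$ only if it is trivial; this is exactly the non-degeneracy/effectiveness bookkeeping and is the step I would write most carefully.

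For part~2, I need $\mathrm{ev}_o\colon V\to E_o$ to be surjective, which is where non-degeneracy of $V$ enters: by hypothesis $V$ acts transitively on the fiber $\pi^{-1}(o)$, and since $V$ acts on that fiber by parallel displacements (translations by $\alpha(o)$ for $\alpha\in V$, the base point being fixed), transitivity means $\{\alpha(o)\mid\alpha\in V\}=E_o$, i.e.\ $\mathrm{ev}_o$ is onto. Combined with $\ker\mathrm{ev}_o=V_0$ and the $G_0$-equivariance established above, the first isomorphism theorem for modules gives a $G_0$-module isomorphism $V/V_0\cong E_o$, as claimed.

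I expect the main obstacle to be purely expository rather than mathematical: phrasing the argument in part~1 that "no non-zero $G_0$-submodule sits inside $V_0$" so that it genuinely uses the surrounding $G$-module structure on $V$ (not merely the $G_0$-structure) and the fact that a nonzero section cannot vanish identically on the homogeneous space $N$. Everything else — equivariance of evaluation, surjectivity from non-degeneracy, the first isomorphism theorem — is routine once the evaluation map $\mathrm{ev}_o$ is introduced as the organizing device. I would therefore lead with the definition of $\mathrm{ev}_o$, dispatch part~2 quickly, and spend the bulk of the proof on the effectiveness claim in part~1.
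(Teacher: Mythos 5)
Your treatment of the $G_0$-stability of $V_0$ and of part~2 is fine and agrees with what the paper dismisses as "obvious": the evaluation map $\mathrm{ev}_o\colon V\to E_o$, $\alpha\mapsto\alpha(o)$, is $G_0$-equivariant with kernel $V_0$, and non-degeneracy of $V$ makes it surjective, so $V/V_0\cong E_o$ as $G_0$-modules. The trouble is in the effectiveness claim, and it is twofold. First, the statement you are trying to prove literally --- that $V_0$ contains no non-zero submodule of the \emph{$G_0$}-module $V$ --- is false in general; "$G_0$-module" there is a slip for "$G$-module", as both the definition of an effective $(\g,\g_0)$-module (no non-zero $\g$-submodules inside $V_0$) and the paper's own proof (which takes $W$ to be a submodule for the full group and invokes transitivity on the base) make clear. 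A counterexample to the literal reading: take $N$ the affine line, $G$ its affine group, $G_0$ the scalings fixing the origin, $\pi$ the trivial line bundle, and $V$ the polynomials of degree at most $n$; then $V_0=\{f\mid f(0)=0\}$ contains the $G_0$-stable line $\langle x^n\rangle$.

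Second, your proposed repair --- replace a $G_0$-submodule $U\subset V_0$ by the $G$-submodule it generates and argue that the latter consists of identically vanishing sections --- does not work. For $\beta\in U$ and $g\in G$ the translate $g.\beta$ satisfies $(g.\beta)(g.o)=0$, i.e.\ it vanishes at the \emph{moving} point $g.o$, not at $o$ and certainly not identically; the $G$-submodule generated by $U$ is in general not contained in $V_0$ at all (in the example above it is all of $V$). You correctly sensed that your first attempt fails because $g^{-1}.\alpha$ need not stay in $U$, but the "cleaner argument" inherits exactly the same defect. The correct (and short) argument is the paper's: assume $W\subset V_0$ is a submodule for the full group; then for every $\beta\in W$ and $g\in G$ one has $g^{-1}.\beta\in W\subset V_0$, so $0=(g^{-1}.\beta)(o)=g^{-1}.\bigl(\beta(g.o)\bigr)$, whence $\beta(g.o)=0$ because $g^{-1}$ acts on the fibre by a linear isomorphism. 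Transitivity of $G$ on $N$ then gives $\beta\equiv0$, hence $W=\0$. The hypothesis that $W$ is stable under all of $G$ is precisely what lets you move the vanishing point around, and it cannot be weakened to $G_0$-stability.
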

\begin{proof}
The first two assertions are obvious. Suppose that $W$ is a
submodule of the $H$-module $V$ lying in $V_0$. Since $H$ acts
linearly on fibres of the projection $\pi$ and transitively on the
base $N$, we see that $W(q)=\0$ for all points $q\in N$. Hence,
$W=\0$.
\end{proof}

This lemma implies that $(V,V_0)$ is an effective
$(\g,\g_0)$-module whose codimension coincides with the dimension
of the vector bundle $\pi$. It is easy to see that the
correspondence between the pairs $(\pi,V)$, where $\pi$ is an
invariant vector bundle over $G/G_0$ and $V$ is a non-degenerate
subspace in $\F(\pi)$, and effective $(\g,\g_0)$-modules is
one-to-one. Indeed, having a $(\g,\g_0)$-module $(V,V_0)$ we may
construct the $G_0$-module $V/V_0$ and the corresponding invariant
vector bundle $\pi\colon E\to M$, such that $E_o=V/V_0$. Then
equation~\eqref{eq:1} allows as to embed $V$ to $\F(\pi)$ in the
following way:
\[
v\mapsto \phi_v,\text{ where }\phi_v\colon G\to E_o,\ g\mapsto g.v+V_0.
\]
It is easy to show that this mapping is injective if and only of
$(V,V_0)$ is effective.

\subsection{Extensions of pairs of Lie algebras}

Let $(V,V_0)$ be a $(\g,\g_0)$-module.
\begin{df}\label{ext:def}
We say that a pair $(\h,\h_0)$ of Lie algebras is an
\emph{extension} of the  $(\g,\g_0)$-module $(V,V_0)$ if
  \begin{enumerate}
  \item $V$ is a commutative ideal in $\h$, and $V_0=V\cap\h$;
  \item there exists a surjective homomorphism of Lie algebras $\al\colon\h\to\g$ such that
    \begin{itemize}
    \item[i)] $\ker\al=V$;
    \item[ii)] $\al(\h_0)=\g_0$;
    \item[iii)] $[x,v]=\al(x).v$ for all $x\in\h,\,v\in V$.
    \end{itemize}
  \end{enumerate}
\end{df}

Let $\al\colon\h\to\g$ be a homomorphism satisfying condition~(2)
of Definition~\ref{ext:def}, and let $\al_0=\al|_{\h_0}$. Then
conditions (2.i) and (2.ii) of this definition actually mean that
we have the following commutative diagram:
\[
\begin{CD}
\0@>>> V_0 @>>> \h_0 @>\al_0>> \g_0 @>>>\0\\
 @VVV @VVV @VVV @VVV  @VVV \\
\0@>>> V @>>> \h @>\al>>\g @>>>\0,
\end{CD}
\]
where all vertical arrows are natural embeddings. Equivalence of
two extensions can be defined in the  natural way.

Consider the following subcomplex of the standard cohomology complex $C(\g,V)$:
\[
C^n(\g,\g_0,V,V_0)=\{\omega\in C^n(\g,V)\mid w(\wedge^n\g_0)\subset V_0\}.
\]
It is easy to check that $dC^n(\g,\g_0,V,V_0)\subset
C^{n+1}(\g,\g_0,V,V_0)$, and therefore, our subcomplex is
well-defined. We call the cohomology space $H(\g,\g_0,V,V_0)$ of
this subcomplex \emph{the cohomology space of the pair $(\g,\g_0)$
with values in $(V,V_0)$.}

\begin{thm}\label{ext:thm}\mpar
1. Equivalence classes of extensions of the $(\g,\g_0)$-module
$(V,V_0)$ are in one-to-one correspondence with elements of the
second cohomology space $H^2(\g,\g_0,V,V_0)$.

2. Let $\pi\colon C(\g,V)\to C(\g_0,V/V_0)$ be the natural
projection of complexes given by
\[
\pi(\omega)(x_1,\dots,x_n)=\omega(x_1,\dots,x_n)+
V_0,\quad\omega\in C^n(\g,V),\ x_1,\dots,x_n\in\g_0.
\]
Then the following sequence of complexes is exact:
\begin{equation}\label{seq:com}
\0\to C(\g,\g_0,V,V_0)\to C(\g,V)\to C(\g_0,V/V_0)\to\0.
\end{equation}
\end{thm}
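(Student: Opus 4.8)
The plan is to prove both parts together, since part 2 provides the exact sequence of complexes that, via its long exact cohomology sequence, feeds into part 1. First I would verify exactness of the sequence \eqref{seq:com}. By definition $C(\g,\g_0,V,V_0)$ is the kernel of $\pi$, so the only thing to check is that $\pi$ is surjective onto $C(\g_0,V/V_0)$. Given a cochain $\bar\omega\in C^n(\g_0,V/V_0)$, choose for each argument tuple a representative in $V$ and extend multilinearly using a linear splitting $\g=\g_0\oplus\mathfrak m$ of vector spaces and a linear splitting $V=V_0\oplus U$ with $U\cong V/V_0$: this produces $\omega\in C^n(\g,V)$ with $\pi(\omega)=\bar\omega$. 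One must also confirm that $\pi$ is a chain map, i.e.\ $\pi d=\bar d\pi$, which is immediate because the Chevalley--Eilenberg differential on $C(\g_0,V/V_0)$ is exactly the one induced by reducing mod $V_0$ and restricting to $\g_0$; the fact that $V_0$ is $\g_0$-stable is what makes the induced differential well-defined. This establishes part 2.

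For part 1, the approach is the standard one for classifying extensions by an abelian kernel, adapted to the relative setting. Given an extension $(\h,\h_0)$ as in Definition~\ref{ext:def}, pick a vector-space splitting $s\colon\g\to\h$ of $\al$ with $s(\g_0)\subset\h_0$ (possible since $\al_0\colon\h_0\to\g_0$ is surjective with kernel $V_0\subset\h_0$). Define $\omega(x,y)=[s(x),s(y)]-s([x,y])\in V$; the Jacobi identity in $\h$ forces $d\omega=0$, and because $s(\g_0)\subset\h_0$ and $V\cap\h_0=V_0$ we get $\omega(\wedge^2\g_0)\subset V_0$, so $\omega\in Z^2(\g,\g_0,V,V_0)$. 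Changing the splitting $s$ (subject to $s(\g_0)\subset\h_0$) changes $\omega$ by the coboundary of a $1$-cochain lying in $C^1(\g,\g_0,V,V_0)$, so the class $[\omega]\in H^2(\g,\g_0,V,V_0)$ is well-defined and depends only on the equivalence class of the extension. Conversely, given a cocycle $\omega\in Z^2(\g,\g_0,V,V_0)$, one builds $\h=\g\oplus V$ as a vector space with bracket $[(x,u),(y,w)]=([x,y],\,x.w-y.u+\omega(x,y))$ and sets $\h_0=\g_0\oplus V_0$; the cocycle identity gives the Jacobi identity, and $\omega(\wedge^2\g_0)\subset V_0$ together with $\g_0$-stability of $V_0$ guarantees $\h_0$ is a subalgebra, yielding an extension whose associated class is $[\omega]$. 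Checking that cohomologous cocycles give equivalent extensions and that the two constructions are mutually inverse on equivalence classes is then routine bookkeeping.

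I expect the main obstacle to be purely notational rather than conceptual: one must be scrupulous that every splitting used is compatible with the $\g_0$/$V_0$ flags, so that all cochains stay inside the subcomplex $C(\g,\g_0,V,V_0)$ and all coboundaries that appear when comparing splittings also lie in $C^1(\g,\g_0,V,V_0)$ rather than merely in $C^1(\g,V)$. In other words, the content is that the relative constraints are preserved at every step of the classical derivation of $H^2$ as the classifier of abelian extensions; once the splittings are chosen to respect the subalgebra/subspace structure, the verification that $\omega$ is a relative cocycle, that its class is a relative invariant, and that the reconstruction lands in a genuine \emph{pair} extension all come down to the single observation $V\cap\h_0=V_0$ combined with $\g_0$-invariance of $V_0$. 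No deeper input is needed; in particular part 2 is not logically required for part 1, but it records the exact sequence whose connecting maps and five-term exact sequence will be the computational workhorse in the later sections.
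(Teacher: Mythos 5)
Your proposal is correct and follows essentially the same route as the paper: part 1 is the standard splitting/2-cocycle correspondence for abelian extensions, restricted so that the section respects $\g_0$ and $\h_0$ (the paper uses the opposite sign convention for $\omega$, which is immaterial), and part 2 is the observation that the subcomplex is exactly $\ker\pi$ together with a splitting argument for surjectivity, which the paper dismisses as immediate. You supply more detail than the paper does (the explicit reconstruction $\h=\g\oplus V$ from a cocycle, and the verification that $\pi$ is a surjective chain map), but the underlying argument is identical.
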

\begin{proof}\mpar
1. Let $\bt$ be a section of the surjection $\al$ such that
$\bt(\g_0)\subset\h_0$. Consider the element $\omega\in
C(\g,\g_0,V,V_0)$ such that:
\[
\omega(x,y)=\bt([x,y])-[\bt(x),\bt(y)], \quad x,y\in \g.
\]
We see that $\al(\omega(x,y))=0$, and if $x,y\in\g_0$, then
$\omega(x,y)\in V_0$. Therefore, $\omega$ is well-defined. A
standard computation  shows that $d\omega=0$, and hence $\omega\in
Z^2(\g,\g_0,V,V_0)$. Any other section $\bt'$ of $\al$ with
$\bt'(\g_0)\subset\h_0$ has the form $\bt'=\bt+\phi$, where
$\phi\in C^1(\g,\g_0,V,V_0)$. The corresponding element $\omega'$
is equal to $\omega+d\phi$. This proves the first part of the
Theorem.

2. This is an immediate consequence of the definitions.
\end{proof}

The short exact sequence~\eqref{seq:com} of complexes  produces
the long exact sequence of their cohomology spaces. As an
immediate application of this, we obtain the following
\begin{cor}
Suppose that the Lie algebra $\g$ is semisimple. Then  equivalence
classes of extensions of the $(\g,\g_0)$-module $(V,V_0)$ are in
one-to-one correspondence with elements of the space
$H^1(\g_0,V/V_0)$.  \end{cor}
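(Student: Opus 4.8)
The plan is to apply the long exact cohomology sequence associated to the short exact sequence of complexes~\eqref{seq:com} established in part~2 of Theorem~\ref{ext:thm}. Writing out a portion of this sequence, we get
\[
\cdots\to H^1(\g,V)\to H^1(\g_0,V/V_0)\to H^2(\g,\g_0,V,V_0)\to H^2(\g,V)\to\cdots.
\]
By part~1 of Theorem~\ref{ext:thm}, the middle term $H^2(\g,\g_0,V,V_0)$ classifies the equivalence classes of extensions of the $(\g,\g_0)$-module $(V,V_0)$, so it suffices to show that the connecting map $H^1(\g_0,V/V_0)\to H^2(\g,\g_0,V,V_0)$ is an isomorphism.

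The key input is Whitehead's lemma: since $\g$ is semisimple (and we work over a field of characteristic zero, and $V$ is finite-dimensional), one has $H^1(\g,V)=0$ and $H^2(\g,V)=0$ for the Chevalley--Eilenberg cohomology with coefficients in any finite-dimensional $\g$-module $V$. First I would note that the finite-dimensionality of $V$ (part of the standing assumptions on $(V,V_0)$ in the relevant setting) together with semisimplicity of $\g$ gives the vanishing of both flanking terms. Exactness of the long sequence then forces the connecting homomorphism $H^1(\g_0,V/V_0)\to H^2(\g,\g_0,V,V_0)$ to be simultaneously injective (because $H^1(\g,V)=0$ maps onto its kernel) and surjective (because its image is the kernel of the zero map $H^2(\g,\g_0,V,V_0)\to H^2(\g,V)=0$), hence an isomorphism.

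Combining this isomorphism with the identification from Theorem~\ref{ext:thm}(1) yields a bijection between equivalence classes of extensions of $(V,V_0)$ and elements of $H^1(\g_0,V/V_0)$, which is exactly the claim of the Corollary. The only point that needs a word of care is that $V$ must be finite-dimensional for Whitehead's lemma to apply in the form stated; if one wanted the statement for infinite-dimensional $V$ one would instead need $V$ to be a direct sum of finite-dimensional submodules, which is automatic here since $\g$ is semisimple and, in the applications, $V$ arises as a finite-dimensional submodule of a space of sections. I do not anticipate any real obstacle: the argument is a formal diagram chase once Whitehead's lemma is invoked, and the main conceptual work was already done in proving Theorem~\ref{ext:thm}.
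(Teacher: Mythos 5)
Your proof is correct and follows exactly the paper's own argument: apply the long exact sequence arising from the short exact sequence of complexes~\eqref{seq:com} and use Whitehead's lemma ($H^1(\g,V)=H^2(\g,V)=0$ for $\g$ semisimple and $V$ finite-dimensional) to conclude that the connecting map $H^1(\g_0,V/V_0)\to H^2(\g,\g_0,V,V_0)$ is an isomorphism. Your added remark about finite-dimensionality of $V$ is a reasonable point of care that the paper leaves implicit.
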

\begin{proof}
Since $\g$ is semisimple, we have $H^1(\g,V)=H^2(\g,V)=\0$. But
from the long exact sequence
\[
\cdots\to H^1(\g,V)\to H^1(\g_0,V/V_0)\to H^2(\g,\g_0,V,V_0)\to
H^2(\g,V)\to\cdots
\]
 we see that the spaces $H^2(\g,\g_0,V,V_0)$
and $H^1(\g_0,V/V_0)$ are naturally isomorphic.
\end{proof}

\subsection{Proof of Theorem~\ref{thm:1}}

Local description of one-dimensional invariant distributions on
homogeneous spaces is equivalent to the description of triples
$(\g,\g_0,W)$, where $(\g,\g_0)$ is the effective pair
corresponding to a given homogeneous space, and $W$ is the
one-dimensional submodule of the $\g_0$-module $\g/\g_0$
corresponding to the invariant distribution.

Let $\pi\colon\g\to\g/\g_0$ be the natural projection, and
$\p=\pi^{-1}(W)$. Then $\p$ is a minimal overalgebra of $\g_0$.
Let $\ag$ be the largest ideal of the Lie algebra $\g$ lying in
$\p$. We let $\ag_0=\ag\cap\g_0$. From the Corollary to
Lemma~\ref{lem2} it follows that we have one and only one of the
following cases:
\begin{itemize}
\item[I.] The pair $(\g,\p)$ is effective.
\item[II.] $\ag=k^n$, $\ag_0=k^{n-1}$, $(n\ge1)$.
\item[III.] $\ag=(k E_n)\rtt k^n$, $\ag_0=(k E_n)\rtt k^{n-1}$, $(n\ge1)$.
\item[IV.] $\ag=\sl(2,k)$, $\ag_0=\st(2,k)$.
\end{itemize}

Consider separately each of these cases. In case~I the local
structure of the corresponding invariant one-dimensional
distribution is described in Example~\ref{ex1}.

\begin{lem}\label{c:2}
In case II all triples $(\g,\g_0,W)$ can be described as follows.
There exist an effective pair $(\h,\h_0)$ of codimension
$\codim_{\g}\g_0-1$ and an effective $(\h,\h_0)$-module $(V,V_0)$
of codimension~$1$ such that
\begin{enumerate}
 \item the pair $(\g,\g_0)$ is an extension of the $(\h,\h_0)$-module $(V,V_0)$;
 \item $W=(V+\g_0)/\g_0$.
\end{enumerate}
\end{lem}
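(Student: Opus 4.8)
The plan is to unwind Case~II concretely and show that the pair $(\g,\g_0)$ is forced into the shape of an extension of a module over a smaller pair. First I would set $\h=\g/\ag$ and $\h_0=\g_0/\ag_0$; since $\ag$ is an ideal of $\g$ contained in $\p$, the quotient $\h$ is a Lie algebra, and because $\ag_0=\ag\cap\g_0$ we get an honest inclusion $\h_0\hookrightarrow\h$. I would then check that $(\h,\h_0)$ is effective: any ideal of $\h$ contained in $\h_0$ pulls back to an ideal of $\g$ lying in $\ag+\g_0$; one argues, using that $\ag=k^n$ is abelian and $\ag_0=k^{n-1}$ has codimension one in $\ag$, that such a pullback must lie in $\g_0$ (the extra direction of $\ag$ cannot be in an ideal of $\g$ contained in $\g_0$ since $(\g,\g_0)$ is effective), hence the ideal of $\h$ is zero. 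The codimension count is immediate: $\codim_\g\ag_0\cap$ versus $\codim_\h\h_0$ differ exactly by $\codim_\ag\ag_0=1$, so $\codim_\h\h_0=\codim_\g\g_0-1$.

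Next I would produce the $(\h,\h_0)$-module. Put $V=\ag=k^n$ with the $\g$-action by $\ad$, which descends to an action of $\h=\g/\ag$ on $V$ because $\ag$ is abelian (so $\ag$ acts trivially on itself); set $V_0=\ag_0=k^{n-1}$. Then $V_0$ is $\h_0$-stable since $[\g_0,\ag_0]\subset\ag_0$ (as $\ag_0$ is an ideal of $\g$), and $(V,V_0)$ is effective: a nonzero $\h$-submodule of $V$ inside $V_0$ would be a nonzero ideal of $\g$ lying in $\g_0$, contradicting effectiveness of $(\g,\g_0)$. Its codimension is $\codim_\ag\ag_0=1$. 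Having $(\h,\h_0)$ and $(V,V_0)$ in hand, verifying that $(\g,\g_0)$ is an extension of the $(\h,\h_0)$-module $(V,V_0)$ in the sense of Definition~\ref{ext:def} is a matter of matching axioms: $V=\ag$ is a commutative ideal of $\h=\g$ wait --- here I must be careful, the "$\h$" in the extension definition is $\g$ itself; so I take $\al\colon\g\to\g/\ag=\h$ the quotient map, whose kernel is $V=\ag$, which satisfies $\al(\g_0)=\h_0$, and $[x,v]=\al(x).v$ for $x\in\g$, $v\in V$ holds precisely because the $\h$-action on $V$ was defined to be the descended adjoint action. This gives part~(1).

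For part~(2), I would show $W=(V+\g_0)/\g_0$. By construction $\p=\pi^{-1}(W)$ satisfies $\g_0+\ag=\p$ (this is exactly the second paragraph of the proof of Lemma~\ref{lem2}: $\g_0+\ag$ is an overalgebra of $\g_0$ inside $\p$, strictly larger than $\g_0$ since $\g_0\not\supset\ag$, hence equal to the minimal overalgebra $\p$). Therefore $W=\pi(\p)=\pi(\g_0+\ag)=(\ag+\g_0)/\g_0=(V+\g_0)/\g_0$, as required; this is one-dimensional because $\codim_\g\g_0$ of $\p$ is $1$. The main obstacle I anticipate is not any single computation but the bookkeeping of which object plays the role of "$\h$" versus the quotient: the extension formalism of Definition~\ref{ext:def} uses the bigger algebra, here $\g$, while the ``$(\h,\h_0)$'' of the Lemma statement is the quotient $\g/\ag$; one has to keep these straight and confirm that the three extension axioms (i)--(iii), together with $V_0=V\cap\g_0$, are all satisfied, and that effectiveness is preserved in both passages (to the quotient pair and to the module). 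Once the identifications $\h=\g/\ag$, $\h_0=\g_0/\ag_0$, $V=\ag$, $V_0=\ag_0$ are fixed, each verification is short.
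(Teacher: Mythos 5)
Your construction is the same as the paper's: set $V=\ag$, $V_0=\ag_0$, $\h=\g/\ag$, $\h_0$ the common image of $\p$ and of $\g_0$ in $\h$ (these coincide because $\p=\ag+\g_0$), and verify the extension axioms via the quotient map $\g\to\g/\ag$; the identification $W=\p/\g_0=(\ag+\g_0)/\g_0$ is also exactly the paper's closing step. The one place where your reasoning goes wrong is the effectiveness of $(\h,\h_0)$: you claim that the preimage in $\g$ of a nonzero ideal of $\h$ contained in $\h_0$ "must lie in $\g_0$," but that preimage always contains all of $\ag$, and $\ag\not\subset\g_0$, so you cannot contradict effectiveness of $(\g,\g_0)$ this way, and the abelianness of $\ag$ plays no role here. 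The correct (one-line) argument, which the paper omits but which is forced by the setup, uses that $\ag$ was chosen as the \emph{largest} ideal of $\g$ lying in $\p$: the preimage of such an ideal of $\h$ is an ideal of $\g$ containing $\ag$ and contained in $\p$, hence equal to $\ag$, so the ideal of $\h$ is zero. With that substitution your proof matches the paper's; everything else (effectiveness of $(V,V_0)$ via effectiveness of $(\g,\g_0)$, the codimension counts, and the bookkeeping of which algebra plays the role of the "big" one in Definition~\ref{ext:def}) is handled correctly.
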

\begin{proof}
Put $V=\ag$, $V_0=\ag_0$, $\h=\g/\ag$, and $\h_0=\p/\ag_0$. Then
the pair $(\h,\h_0)$ is effective and its codimension is one less
than the codimension of $(\g,\g_0)$. Since the ideal $\ag$ is
commutative, the space $V$ can be naturally supplied with the
structure of an $\h$-module. The condition $\p=\ag+\g_0$ implies
that the subspace $V_0=\ag\cap\g_0$ is invariant under the action
of $\h_0$ on $V$, which means that the pair $(V,V_0)$ is an
$(\h,\h_0)$-module. Since the pair $(\g,\g_0)$ is effective, we
conclude that the $(\h,\h_0)$-module $(V,V_0)$ is also effective.
Finally, since $W=\p/\g_0$ and $\p=\ag+\g_0$, we have
$W=(V+\g_0)/\g_0$.
\end{proof}
The local structure of the corresponding invariant one-dimensional
distribution is described in Example~\ref{ex2}.

\begin{lem}\label{c:3}
In case~III all triples $(\g,\g_0,W)$ can be described as follows.
There exist an effective pair $(\h,\h_0)$ of codimension
$\codim_{\g}\g_0-1$ and an effective $(\h\times k,\h_0\times
k)$-module $(V,V_0)$ of codimension 1 such that
\begin{enumerate}
\item $(\g,\g_0)$ is the trivial extension of the $(\h\times
k,\h_0\times k)$-module $(V,V_0)$ (i.e., $\g=(\h\times k)\rtt V$,
$\g_0=(\h_0\times k)\rtt V_0$);
\item $(0,x).v=xv$ for all $x\in k$, $v\in V$;
\item $W=(V+\g_0)/\g_0$.
\end{enumerate}
\end{lem}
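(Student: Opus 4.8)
The plan is to mirror the proof of Lemma~\ref{c:2}, adapting it to the presence of the extra one-dimensional ideal coming from the scaling element~$E_n$. Recall that in case~III we have $\ag=(kE_n)\rtt k^n$ with $\ag_0=(kE_n)\rtt k^{n-1}$, so $\ag$ is the largest ideal of $\g$ lying in $\p$, and $k^n\subset\ag$ is itself a characteristic ideal of $\ag$ (being the derived subalgebra, or the nilradical). Since characteristic ideals of $\ag$ are ideals of $\g$, the subspace $V:=k^n\subset\g$ is a commutative ideal of $\g$. Set $V_0=V\cap\g_0=k^{n-1}$, and let $\h=\g/(kE_n\rtt V)$, $\h_0=\p/(\ag_0)$. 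As in Lemma~\ref{c:2}, effectiveness of $(\g,\g_0)$ forces $(\h,\h_0)$ to be effective and $\codim_\h\h_0=\codim_\g\g_0-1$, because $\p=\ag+\g_0$.

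Next I would locate the scaling element. The one-dimensional space $kE_n$ is a subalgebra of $\g$ but not an ideal; however its image $k:=(kE_n+V)/V$ in $\g/V$ is a well-defined one-dimensional subalgebra, and because $[E_n,V]=V$ while $[E_n,\cdot]$ is trivial on $\ag/V\cong kE_n$, the Lie algebra $\g/V$ contains $k$ as a subalgebra whose bracket with the image of any complement is controlled. The key point is that $kE_n$ acts on $V=k^n$ by the scalar (the identity endomorphism), so $(0,x).v=xv$ for $x\in k$, $v\in V$; this is exactly statement~(2) and it is inherited from the structure of $\ag$. To see that the quotient $\g/V$ actually splits as $\h\times k$ rather than as a nontrivial extension of $\h$ by $k$, I would argue that $kE_n$ centralizes a complement to $\ag$ modulo~$V$: indeed $\ag$ is an ideal, $E_n$ acts trivially on $\g/\ag$, so the adjoint action of $E_n$ on $\g/V$ preserves and acts trivially on the subalgebra $\h=\g/\ag$ embedded appropriately, giving the direct product $\h\times k$ acting on $V$ with $\h_0\times k$ inside.

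Then, for statement~(1), I need $(\g,\g_0)$ to be the \emph{trivial} extension: $\g=(\h\times k)\rtt V$ with $\g_0=(\h_0\times k)\rtt V_0$. Here is where the argument genuinely uses case~III rather than case~II: the presence of the scaling element $E_n$, which acts on $V$ by a nonzero scalar, kills the relevant cohomology. Concretely, by Theorem~\ref{ext:thm}, extensions of the $(\h\times k,\h_0\times k)$-module $(V,V_0)$ are classified by $H^2(\h\times k,\h_0\times k,V,V_0)$; but the inner derivation $\ad E_n$ acts on the whole complex $C(\h\times k,V)$ (and on its subcomplex) as multiplication by~$1$ on cochains valued in $V$, hence induces the identity on cohomology, while at the same time it is chain-homotopic to zero since $E_n$ is an element of the algebra acting semisimply. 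Therefore $H^2(\h\times k,\h_0\times k,V,V_0)=0$ (and likewise $H^1$), so every extension is equivalent to the trivial one, which gives $\g\cong(\h\times k)\rtt V$ and, choosing the splitting compatibly with $\g_0$, $\g_0\cong(\h_0\times k)\rtt V_0$. Finally statement~(3) is immediate: $W=\p/\g_0$ and $\p=\ag+\g_0=V+\g_0$, so $W=(V+\g_0)/\g_0$, exactly as in the previous lemma.

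The main obstacle is the splitting claim: both extracting the direct factor $k$ cleanly (i.e.\ showing $\g/V\cong\h\times k$ as opposed to some semidirect or nontrivial product) and verifying that the extension of $(\h\times k,\h_0\times k)$ by $(V,V_0)$ is trivial. The cohomological vanishing via the scaling derivation $\ad E_n$ is the crux; one must be careful that $E_n$ (or rather its image) really lies in the algebra $\h\times k$ computing the cohomology and that its action on $V$ is by a nonzero scalar, so that the standard ``semisimple element acting with nonzero weight'' argument applies to force $H^1$ and $H^2$ to vanish. Everything else is a direct transcription of the proof of Lemma~\ref{c:2}.
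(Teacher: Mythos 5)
Your overall strategy coincides with the paper's: take $V=[\ag,\ag]=k^n$, pass to $\g/V$, split off the scaling direction to get $\h\times k$, and kill the extension class by a cohomology-vanishing argument driven by the fact that $u=E_n+V$ acts on $V$ as the identity. The cohomological part of your argument is essentially correct and is in substance the proof of the result the paper cites from Fuks: since $u$ is central in $\h\times k$ and acts as $\id$ on $V$, the Lie derivative $L_u$ equals the identity on cochains, while Cartan's formula $L_u=d\iota_u+\iota_u d$ makes it null-homotopic (semisimplicity of $E_n$ is irrelevant here); one only needs to note that $\iota_u$ preserves the relative subcomplex, which holds because $u\in\h_0\times k$. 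The paper instead verifies $H(\h\times k,V)=H(\h_0\times k,V/V_0)=\0$ and invokes the long exact sequence of Theorem~\ref{ext:thm}; your direct argument on the relative complex is equivalent.

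The genuine gap is the splitting $\g/V\cong\h\times k$, which you yourself flag as the main obstacle but do not actually close. Your justification --- ``$E_n$ acts trivially on $\g/\ag$, so $\ad E_n$ preserves and acts trivially on the subalgebra $\h=\g/\ag$ embedded appropriately'' --- presupposes an embedding of $\g/\ag$ into $\g/V$ as an ideal complementary to $ku$, and the existence of that complement is precisely what must be proved. Centrality of $u$ in $\g/V$ is not enough: a one-dimensional central ideal need not be a direct factor (the Heisenberg algebra is the standard counterexample), so a priori $\g/V$ could be a non-split central extension of $\g/\ag$ by $ku$, in which case no such $\h$ exists. What rules this out is the trace character used in the paper: $\phi\colon\g/V\to k$, $x\mapsto\tr(x|_V)$, is a Lie algebra homomorphism (commutators act tracelessly on $V$) with $\phi(u)=n\ne0$, so $\h=\ker\phi$ is an ideal complementary to $ku$, and two complementary ideals commute, giving the direct product; equivalently, $u\notin[\g/V,\g/V]$ because its action on $V$ has nonzero trace. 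This is a short fix, but without it the decomposition $\g=(\h\times k)\rtt V$ in statement~(1) is not established. (Minor additional sloppiness: $\h_0$ should be the image of $\p$ intersected with $\h$ inside $\g/V$, not ``$\p/\ag_0$''.)
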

\begin{proof}
Let $V = [\ag,\ag]$ and $V_0=V\cap\g_0$. Then $V$ is a
characteristic commutative ideal of the Lie algebra $\ag$ and,
therefore, is an ideal of $\g$. Since $V$ is commutative, it can
be naturally supplied with the structure of a $\g/V$-module. Let
$u=E_n+V\in\g/V$. Then $ku$ is an ideal in $\g/V$. Consider the
following homomorphism of Lie algebras:
\[
\phi\colon\g/V\to k,\quad x\mapsto \tr(x_{V}).
\]
Let $\h=\ker\phi$ and $\h_0=\h\cap\p/V$. Since $u.v=v$ for all
$v\in V$, we see that $u\notin\ker\phi$ and that the Lie algebra
$\g/V$ is the direct sum of the ideals $\h$ and $ku$. Since
$u\in\p/V$, we have $\p/V+\h=\g/V$. It follows that
$\p/V=\h_0\oplus ku$, and the codimension of the pair $(\h,\h_0)$
is one less than that of the pair $(\g,\g_0)$.

Then, as in the proof of Lemma~\ref{c:2}, we obtain that the pair
$(\h,\h_0)$ is effective, the $(\h\times k,\h_0\times k)$-module
$(V,V_0)$ is also effective and has codimension 1, and
$W=(V+\g_0)/\g_0$.

Let us prove that any extension of the $(\h\times k,\h_0\times
k)$-module $(V,V_0)$ is trivial. Let as before $u=(0,1)$. Then it
determines an internal gradations on $\h\times k$ and $V$ such
that all elements in $\h\times k$ have degree $0$ and all elements
of $V$ have degree~$1$. It follows
from~\cite[Theorem~1.5.2a]{fuks} that $H(\h\times
k,V)=H(\h_0\times k, V/V_0)=\0$. Now Theorem~\ref{ext:thm} implies
that second cohomology space $H^2(\h\times k,\h_0\times k,V, V_0)$
is also trivial.
\end{proof}
The local structure of the corresponding invariant one-dimensional
distribution is described in Example~\ref{ex3}.

\begin{lem}
In case~IV there exists an effective pair $(\h,\h_0)$ of
codimension $\codim_{\g}\g_0-1$ such that  the triple
$(\g,\g_0,W)$ has the following form:
\begin{enumerate}
\item $\g=\h\times\sl(2,k)$;
\item $\g_0=\h_0\times\st(2,k)$;
\item $W=(\h_0\times\sl(2,k))/\g_0$.
\end{enumerate}
\end{lem}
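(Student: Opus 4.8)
The plan is to split off the semisimple ideal $\ag=\sl(2,k)$ as a direct factor of $\g$ and then pin down $\g_0$ by exploiting that the Borel subalgebra $\st(2,k)$ is self-normalizing in $\sl(2,k)$; the argument runs in parallel with the proofs of Lemmas~\ref{c:2} and~\ref{c:3}.

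First I would invoke the standard fact that a semisimple ideal of a finite-dimensional Lie algebra over a field of characteristic zero is a direct factor: since every derivation of $\sl(2,k)$ is inner, the adjoint action $\ad\colon\g\to\mathrm{Der}(\ag)\cong\ag$ restricts to an isomorphism on $\ag$, so its kernel $\h:=Z_\g(\ag)$ is an ideal with $\g=\h\times\sl(2,k)$. This yields~(1), with $\h$ the centralizer of $\ag$; under this identification $\ag=\{0\}\times\sl(2,k)$ and $\ag_0=\ag\cap\g_0=\{0\}\times\st(2,k)\subseteq\g_0$.

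Second — and this is the only step with any content — I would show that the projection $q\colon\g=\h\times\sl(2,k)\to\sl(2,k)$ maps $\g_0$ into $\st(2,k)$. For $z=(z_1,z_2)\in\g_0$ and any $y\in\st(2,k)$, the bracket $[z,(0,y)]=(0,[z_2,y])$ lies in $\g_0$ (a subalgebra containing both $z$ and $\ag_0$) and in $\ag$, hence in $\ag\cap\g_0=\{0\}\times\st(2,k)$; thus $z_2$ normalizes $\st(2,k)$ in $\sl(2,k)$, and since the Borel $\st(2,k)$ is self-normalizing, $z_2\in\st(2,k)$. Therefore $\g_0\subseteq\h\times\st(2,k)$, and since $\{0\}\times\st(2,k)\subseteq\g_0$ while $\h$ and $\sl(2,k)$ commute inside $\g$, we get $\g_0=\h_0\times\st(2,k)$ with $\h_0:=\h\cap\g_0$, which is~(2).

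Finally I would collect the remaining assertions. The pair $(\h,\h_0)$ is effective: any ideal of $\h$ contained in $\h_0$ produces, via $\times\{0\}$, an ideal of $\g$ contained in $\g_0$, hence is zero. The codimension count gives $\codim_\h\h_0=\codim_\g\g_0-\dim(\sl(2,k)/\st(2,k))=\codim_\g\g_0-1$. For $W$, note $\ag\subseteq\p$, $\ag\not\subseteq\g_0$, and $\g_0+\ag$ is a subalgebra of $\p$; since $\dim\p=\dim\g_0+1$ (as $\p/\g_0\cong W$ is one-dimensional) and $\dim(\g_0+\ag)=\dim\g_0+\dim\ag-\dim\ag_0=\dim\g_0+1$, we conclude $\p=\g_0+\ag=(\h_0\times\st(2,k))+(\{0\}\times\sl(2,k))=\h_0\times\sl(2,k)$, whence $W=\p/\g_0=(\h_0\times\sl(2,k))/\g_0$, which is~(3). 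The main obstacle is the second step: recognizing that self-normality of the Borel subalgebra is precisely what prevents $\g_0$ from projecting onto all of $\sl(2,k)$ — everything else is bookkeeping with direct products.
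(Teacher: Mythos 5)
Your proof is correct and follows essentially the same route as the paper: split off $\ag=\sl(2,k)$ as a direct factor $\g=\h\times\sl(2,k)$ and show that $\g_0$ projects into $\st(2,k)$. The only cosmetic difference is that the paper justifies this last step by the explicit bracket identity $y\in[y+\ag_0,\ag_0]\subset\g_0$, whereas you invoke the self-normalization of the Borel subalgebra via $[z_2,\ag_0]\subseteq\ag\cap\g_0=\ag_0$ --- two formulations of the same structural fact about $\st(2,k)$ inside $\sl(2,k)$.
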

\begin{proof}
Since the ideal $\ag=\sl(2,k)$ is simple, there exists an ideal
$\h$ in $\g$ complementary to $\ag$. Let $\h_0=\h\cap\p$. Since
$\ag\subset\p$, we see that the pair $(\h,\h_0)$ is effective and
has codimension $\codim_{\g}\g_0-1$. Let us show that
$\g_0=\h_0\oplus\st(2,k)$. The projection of $\g_0$ on $\ag$
cannot be larger than $\ag_0=\st(2,k)$. Indeed, if $x\in\h_0$,
$y\in\ag$, and $x+y\in\g_0$, then
$y\in[y+\ag_0,\ag_0]=[x+y+\ag_0,\ag_0]\subset\g_0$.  But since
$\g_0$ has codimension 1 in $\p$, we have
$\g_0=\h_0\oplus\st(2,k)$. The rest of the proof is trivial.
\end{proof}
The local structure of the corresponding invariant one-dimensional
distribution is described in Example~\ref{ex4}.

This completes the proof of Theorem~\ref{thm:1}.\hfill\qed

\section{Lie's results on the classification of Lie algebras of vector fields}
\label{s:2}

The local classification of all Lie algebras on the plane, both
over the fields  $\R$ and $\C$, was obtained by Sophus
Lie~\cite{lie1}. We list only transitive Lie algebras in
Subsection~\ref{ss:2}.

Let $(G,M)$ be a three-dimensional homogeneous space, and let the
Lie group $G$ be connected and non-solvable. We fix an arbitrary
point $o\in M$ and denote by $G_0$ the stationary subgroup at the
point $o$. Since we are interested only in the local equivalence
problem, we can assume without loss of generality that both $G$
and $G_0$ are connected.

Consider the corresponding pair $(\g,\g_0)$ of Lie algebras and
the isotropy $\g_0$-module $\g/\g_0$. There are only two cases
possible:
\begin{enumerate}
\item The $\g_0$-module $\g/\g_0$ does not have any one-dimensional
  submodules;
\item There exists a one-dimensional submodule $W$ of the $\g_0$-module
  $\g/\g_0$.
\end{enumerate}

The first case was considered by Sophus Lie (even more widely in the
non-homogeneous situation). His result can now be formulated as
follows:
\begin{thm}
  All effective pairs $(\g,\g_0)$ of codimension three, such that the
  isotropy module does not have any one-dimensional submodules, are
  equivalent to one and only one of the following:
\begin{tabbing}
:::::::\=:::::::::::::::::::::::::::::::\=:::::::::::\kill
\> {\bfseries primitive pairs:}\\[2mm]
$(1)$\>$\g=\sl(4,\C)$,\>$\g_0=\left\{\left.\begin{pmatrix}X&Y\\0&
-\tr X\end{pmatrix}\right|\begin{array}{l}X\in\gl(3,\C),\\Y\in\C^3
\end{array}\right\}$;\\[2mm]
$(2)$\>$\g=\gl(3,\C)\rtt\C^3$,\>$\g_0=\gl(3,\C)$;\\[2mm]
$(3)$\>$\g=\sl(3,\C)\rtt\C^4$,\>$\g_0=\sl(3,\C)$;\\[2mm]
$(4)$\>$\g=\so(5,\C)=\left\{\left.\begin{pmatrix}y&{}^tT&0\\Z&X&T\\
0&{}^tZ&-y\end{pmatrix}\right|\begin{array}{l}X+{}^tX=0,\ y\in\C,
\\Z,T\in\C^3\end{array}\right\}$,\\
\>\> $\g_0=\left\{\begin{pmatrix}
y&{}^tT&0\\0&X&T\\0&0&-y\end{pmatrix}\right\}$;\\[2mm]
$(5)$\>$\g=\mathfrak{co}(3,\C)\rtt\C^3$,\>$\g_0=\mathfrak{co}(3,\C)$;\\
$(6)$\>$\g=\so(4,\C)$,\>$\g_0=\so(3,\C)=\left\{\left.
\begin{matrix}3\\ 1\end{matrix}\begin{pmatrix}X&0\\0&0\end{pmatrix}
\right|X-{}^tX=0\right\}$;\\[2mm]
$(7)$\>$\g=\so(3,\C)\rtt\C^3$,\>$\g_0=\so(3,\C)$;\\[2mm]
$(8)$\>$\g=\spg(4,\C)=\left\{\left.\begin{pmatrix}
  x_1&x_2&z_1&z_2\\ x_3&x_4&z_3&z_1\\ y_1&y_2&-x_4&-x_2\\ y_3&y_1&-x_3&-x_1
  \end{pmatrix}\right|~x_i,y_i,z_i\in\C\right\}$,\\
\>\>$\g_0=\left\{\left.\begin{pmatrix}
  x_1&x_2&z_1&z_2\\ 0&x_4&z_3&z_1\\ 0&y_2&-x_4&-x_2\\ 0&0&0&-x_1
  \end{pmatrix}\right|~x_1,x_2,x_4,y_2,z_1,z_2,z_3\in\C\right\}$,\\[3mm]
\> {\bfseries imprimitive pairs:}\\[2mm]
$(9)$\>$\g=(\sl(2,\C)\times\langle\px\rangle)\rtt(\C^2\otimes V(p(t))$,\\
\>\>$\g_0=(\sl(2,\C)\times\0)\rtt(\C^2\otimes V_0(p(t))$;
\end{tabbing}
where $p(t)=a_0+a_1t+\dots+a_nt^n$ is an arbitrary polynomial in $t$,
$V(p(t))$ is the space of solutions of the corresponding linear
differential equation
$$a_nf^{(n)}(x)+\dots+a_1f'(x)+a_0f(x)=0,$$
and $V_0(p(t))$ consists of all functions $f\in V(p(t))$ such that $f(0)=0$.
\begin{tabbing}
  :::::::\=:::::::::::::::::::::::::::::::\=:::::::::::\kill
  $(10)$\>$\g=(\gl(2,\C)\times\langle\px\rangle)\rtt\big(\C^2\otimes V(p(t))\big)$,\\
  \>\>$\g_0=(\gl(2,\C)\times\0)\rtt\big(\C^2\otimes V_0(p(t))\big)$;\\[2mm]
  $(11)$\>$\g=(\sl(2,\C)\times\langle\px,x\px+\al\rangle)
  \rtt\big(\C^2\otimes V(t^n)\big)$,\\
  \>\> $\g_0=(\sl(2,\C)\times\langle x\px+\al\rangle)\rtt\big(\C^2\otimes
  V_0(t^n)\big)$;\\[2mm]
  $(12)$\>$\g=(\gl(2,\C)\times\langle\px,x\px\rangle)
  \rtt\big(\C^2\otimes V(t^n)\big)$,\\
  \>\> $\g_0=(\gl(2,\C)\times\langle x\px\rangle)\rtt\big(\C^2\otimes
  V_0(t^n)\big)$;\\[2mm]
  $(13)$\>$\g=(\sl(2,\C)\times\langle\px,2x\px+n,x^2\px+nx\rangle)
  \rtt\big(\C^2\otimes V(t^{n+1})\big)$,\\
  \>\> $\g_0=(\sl(2,\C)\times\langle 2x\px+n,x^2\px+nx\rangle)
  \rtt\big(\C^2\otimes V_0(t^{n+1})\big)$;\\[2mm]
  $(14)$\>$\g=(\gl(2,\C)\times\langle\px,x\px,x^2\px\rangle)
  \rtt\big(\C^2\otimes V(t^{n+1})\big)$,\\
  \>\> $\g_0=(\gl(2,\C)\times\langle x\px,x^2\px\rangle)
  \rtt\big(\C^2\otimes V_0(t^{n+1})\big)$.
\end{tabbing}
\end{thm}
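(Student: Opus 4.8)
The plan is to follow the general strategy that, over the fields $\R$ and $\C$, a transitive Lie algebra of vector fields whose isotropy representation on $\g/\g_0$ has no one-dimensional invariant subspace either preserves no foliation at all (the primitive case) or preserves a single invariant $2$-dimensional foliation on which the action on the generic fibre is itself primitive (i.e.\ two-dimensional and without invariant $1$-foliations). The first step is therefore to invoke the corollary to Lemma~\ref{lem2}: a minimal overalgebra $\p$ of $\g_0$ corresponds to an invariant distribution of the smallest nonzero dimension, and if this distribution is one-dimensional we are in the situation handled by Theorem~\ref{thm:1}, which in the absence of a $1$-dimensional isotropy submodule is excluded. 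Hence every minimal overalgebra of $\g_0$ has codimension $1$ in $\g$, i.e.\ either $\g_0$ is maximal (primitive case) or there is an invariant $2$-dimensional foliation and no invariant line field.

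In the primitive case I would apply Lemma~\ref{lem1}: the pair $(\g,\g_0)$ is either of type $\ag\rtt(V\otimes k^n)$ with $V$ a faithful simple $\ag$-module, or $(\g,\g_0)$ with $\g$ semisimple and $\g_0$ a maximal effective subalgebra. Imposing $\codim\g_0=3$ in the first type forces $\dim(V\otimes k^n)=3$ with $\dim V\in\{1,3\}$ (since $V$ simple, $\dim V=1$ gives $n=3$ and $\ag\subseteq\gl(1)=k$; $\dim V=3$, $n=1$ gives $\ag$ a subalgebra of $\gl(3)$ acting irreducibly, and reductivity plus faithful-simple forces $\ag\in\{\sl(3),\gl(3),\so(3),\co(3)\}$ over $\C$). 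This produces pairs $(2)$, $(3)$, $(5)$, $(7)$. For $\g$ semisimple with $\dim\g/\g_0=3$ one runs through the short list of complex semisimple $\g$ of small rank admitting a codimension-$3$ maximal subalgebra with no proper ideal of $\g$ inside $\g_0$; the classical low-dimensional geometries $\sl(4,\C)/P$ (the $\C^3$ as an affine chart of $\C P^3$, item $(1)$), $\so(5,\C)/P$ (item $(4)$, a contact chart), $\so(4,\C)/\so(3,\C)$ (item $(6)$), and $\spg(4,\C)/P$ (item $(8)$) exhaust these; here one uses the Morozov–Dynkin theory of parabolic subalgebras together with the classification of maximal subalgebras of rank $\le 2$ simple Lie algebras, and checks effectiveness in each case. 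One must also verify the isotropy module has no invariant line, which is immediate since the listed isotropy representations are either irreducible or have no one-dimensional quotient/sub.

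The remaining case is an invariant $2$-dimensional foliation $\F$ with no invariant $1$-foliation: letting $\p$ be the codimension-$1$ overalgebra defining $\F$, the fibre carries a transitive $2$-dimensional action that is \emph{primitive} (any invariant line field on the fibre would refine $\F$ to an invariant $1$-foliation). By Lie's plane classification, a primitive transitive pair on a surface is, up to equivalence, $\sl(2,k)\ltimes k^2$ or one of its enlargements by scalars/projective vector fields, with the fibre action on $k^2\cong\g_0$-module being the standard one. Writing $V$ for the "vertical" part (the ideal generated by the fibre directions) one obtains $\g$ as an extension whose structure is governed by Theorem~\ref{ext:thm} and its corollary; since the base of the foliation is $1$-dimensional and the fibre is the affine plane, $V$ is a module of the form $\C^2\otimes V(p(t))$ where $V(p(t))$ is the finite-dimensional space of solutions of a constant-coefficient ODE (these being exactly the finite-dimensional submodules of $\F(\pi)$ for the one-dimensional base — the classical fact that finite-dimensional $\langle\px\rangle$-submodules of $C^\infty(k)$ are solution spaces of such ODEs). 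Running through the four possible "symmetry algebras of the base" $\langle\px\rangle$, $\langle\px,x\px\rangle$, $\langle\px,x\px+\al\rangle$, $\langle\px,x\px,x^2\px\rangle$ paired with $\sl(2,k)$ or $\gl(2,k)$ on the fibre, and using $H^1$-vanishing from the graded structure (as in Lemma~\ref{c:3}) to see the extension is the semidirect one, yields exactly items $(9)$–$(14)$; the constraints on $p(t)$ (monomials $t^n$, $t^{n+1}$ in cases $(11)$–$(14)$) come from requiring the scalar or projective vector fields on the base to normalize $V(p(t))$.

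\medskip

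The main obstacle I expect is the \emph{uniqueness / non-redundancy} part — showing that the fourteen families are pairwise inequivalent and that within each family the stated parameter ($p(t)$ up to affine reparametrization of $t$, or $\al$ modulo the relevant shift) is a genuine invariant and complete. Equivalences are morphisms of pairs, so one compares the algebraic invariants: the isomorphism type of $\g$, of $\g_0$, the isotropy module $\g/\g_0$, the derived series and Levi decomposition, and — for $(9)$–$(14)$ — the pair (Levi factor acting on the fibre, symmetry algebra of the base, module $V(p(t))$). The delicate point is that two ODEs $p(t)$, $\tilde p(t)$ give equivalent actions iff $\tilde p$ is obtained from $p$ by an affine substitution $t\mapsto at+b$ together with rescaling, which must be matched against the normalizer of $\langle\px\rangle$ (or of $\langle\px,x\px\rangle$, etc.) inside the full reparametrization group of the base; getting these bookkeeping statements exactly right, and confirming no family secretly specializes into another (e.g.\ degenerate $p$), is where the real work lies. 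The existence half, by contrast, is a finite check that each listed pair is effective, has codimension $3$, and has isotropy module without an invariant line.
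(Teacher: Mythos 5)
You should first note that the paper does not actually prove this theorem: it is quoted as Sophus Lie's classification from~\cite{lie1} (the text introducing it says explicitly that ``the first case was considered by Sophus Lie'' and merely reformulates his result), so there is no in-paper argument to match yours against. Judged on its own terms, your reduction (no one-dimensional isotropy submodule implies either $\g_0$ maximal, or a codimension-one overalgebra $\p$ whose induced action on the two-dimensional leaves has no invariant line field) is sound, and using Lemma~\ref{lem1} for the primitive branch is the natural move. But the primitive enumeration is asserted rather than proved: ``one runs through the short list of complex semisimple $\g$ admitting a codimension-3 maximal effective subalgebra'' is precisely the hard content of items (1), (4), (6), (8), and nothing in your text produces that list or shows it is exhaustive. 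Also, in type (1) of Lemma~\ref{lem1} the codimension of $\g_0$ equals $\dim V$, so the sub-case $\dim V=1$, $n=3$ that you entertain is vacuous for codimension three.

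The more serious gap is in the imprimitive branch. The primitive transitive actions on the plane are not only $\sl(2,k)\ltimes k^2$ and $\gl(2,k)\ltimes k^2$ but also the projective action of $\sl(3,k)$ (algebra 9 of Subsection~\ref{ss:2}); your final enumeration silently pairs the base algebras only with $\sl(2)$ or $\gl(2)$ on the fibre and never explains why an $\sl(3)$ leaf action cannot occur. This exclusion is a genuine theorem: one must show that whenever the effective quotient $\h/\bg_0$ acting on the leaves is \emph{simple}, the isotropy module $\g/\g_0$ automatically acquires a one-dimensional submodule complementary to $\h/\g_0$, contradicting the hypothesis. The paper proves exactly this as Lemma~\ref{lem:61} (stated in the real section but with a field-independent proof), via a nontrivial induction on the filtration $\bg_i$ and the complementary $\ag$-submodules $W_i$; it is not a one-line observation, and without it your case analysis is incomplete. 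The remaining sketched steps (the identification of the vertical module as $\C^2\otimes V(p(t))$, the splitting of the extension via the grading argument of Lemma~\ref{c:3}, and the normalizer bookkeeping for the equivalence $p(t)\sim p(at+b)$) are plausible and correctly located, but they too are programme rather than proof.
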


We list the corresponding Lie algebras of vector fields in complex space
in Subsections~\ref{ss:3p} and~\ref{ss:3a}.

\section{Non-solvable Lie algebras in space with an invariant one-dimensional foliation}
\label{s:classif}

\subsection{Possible cases}
Consider now the case, when the isotropy module has some
one-dimensional submodule. Let $\p$ be an overalgebra of $\g_0$,
such that $\codim_{\h}\g_0=1$, and let $\ag$ be a maximal ideal of
$\g$ contained in $\p$. From Theorem~\ref{thm:1} it follows that
only 4 cases are possible:
\begin{itemize}
\item[{[B]}] $\ag=\{0\}$; description of such pairs $(\g,\g_0)$ is reduced
to the classification of subalgebras of codimension 1 in stationary
subalgebras of homogeneous spaces [6--14].
\item[{[C1]}] $\ag$ is commutative; then the pair $(\g,\g_0)$ is an extension of
one of the pairs [6--14] by means of a certain effective
  $(\g,\g_0)$-module $(V,V_0)$ of codimension 1.
\item[{[C2]}] $\ag$ is isomorphic to $(k\id)\rtt k^n$; this case is
  similar to the previous one with one additional simplification.
  Namely, all extensions are trivial in this case.
\item[{[D]}] $\ag$ is isomorphic to $\sl(2,k)$. In this case the pair
  $(\g,\g_0)$ has the form $\g=\h\times\sl(2,k)$,
  $\g_0=\h_0\times\st(2,k)$, where $(\h,\h_0)$ is an
  effective pair of Lie algebras corresponding to a certain
  two-dimensional homogeneous space. Since the description of such
  pairs is known, this case does not require any additional work.
\end{itemize}

\subsection{Subalgebras of codimension~1 in stationary subalgebras}
\label{ss:3.2}

Let $\g$ be one of the Lie algebras of vector fields [6--14]
listed in~\ref{ss:2}, and let $\h_0$ be its
stationary subalgebra. In this subsection we list all subalgebras
$\g_0$ in $\h_0$ of codimension~1, which satisfy one additional
property: \emph{there are no ideals $\ag$ in $\g$ such that
  $\codim_{\ag+\g_0}\g_0=1$}. This property guarantees that the pair
$(\g,\g_0)$ will not fall to three other cases C1,C2,D described
above.

All pairs $(\g,\g_0)$ which satisfy this property have the form:
\begin{itemize}
\item[6,7:] $\g=\sl(2,\C)$, $\g_0=\{0\}$;
  
\item[8:] $\g=\sl(2,\C)\times\sl(2,\C)$,
  $\g_0=\left\{\left(\begin{smallmatrix} x & y \\ 0 & -x
    \end{smallmatrix}\right)+\left(\begin{smallmatrix} \alpha x & z
      \\ 0 & -\alpha x \end{smallmatrix}\right)\right\}$;

\item[9a:] $\g=\sl(3,\C)$, $\g_0=\left\{\left(\begin{smallmatrix} x &
        y & u \\ z & -x & v \\ 0 & 0 & 0
\end{smallmatrix}\right)\right\}$;

\item[9b:] $\g=\sl(3,\C)$, $\g_0=\left\{\left(\begin{smallmatrix} x &
        z & u \\ 0 & y & v \\ 0 & 0 & -x-y
\end{smallmatrix}\right)\right\}$;

\item[10:] $\g=\sl(2,\C)\rtt\C^2$,
$\g_0=\left\{\left(\begin{smallmatrix} x & y \\ 0 & -x
\end{smallmatrix}\right)\right\}\times\{0\}$:
$$\langle \px,\ \py,\ x\py+\pz,\ x\px-y\py-2z\pz,\
y\px-z^2\pz\rangle.$$
This case is equivalent to the case 17a for $n=1$;

\item[11a:] $\g=\gl(2,\C)\rtt\C^2$,
$\g_0=\left\{\left(\begin{smallmatrix} x & y \\ 0 & z
\end{smallmatrix}\right)\right\}\times\{0\}$;

\item[11b:] $\g=\gl(2,\C)\rtt\C^2$,
$\g_0=\left\{\left(\begin{smallmatrix} x & y \\ z & -x
\end{smallmatrix}\right)\right\}\times\{0\}$;

\item[13a:] $\g=\sl(2,\C)\rtt V_n=\langle
  x,y,h,e_0,\dots,e_n\rangle$, $\g_0=\langle x,h,e_0,\dots,e_{n-2}\rangle$;

\item[13b:] $\g=\sl(2,\C)\rtt V_2$, $\g_0=\langle x, h+e_1,
  e_0\rangle$;

\item[14:] $\g=\gl(2,\C)\rtt V_n$, $\g_0=\mathfrak t(2,\C)\rtt\langle
  e_0,\dots, e_{n-2}\rangle$.
\end{itemize}

The corresponding Lie algebras of vector fields are listed in
Subsection~\ref{ss:3b}.

\subsection{Extensions of pairs of codimension 2}
\label{ss:3.3}

Now consider the cases C1 and C2. The solution of the problem in
these cases can be divided to the following steps:
\begin{enumerate}[I.]
\item Description of all invariant one-dimensional vector
  bundles over the homogeneous spaces [6-14].
\item Description of all finite-dimensional submodules in the
  corresponding infinite-dimensional modules of all sections of all
  vector bundles from~I.
\item Computation of cohomology spaces which describe all possible
  extensions of pairs [6-14] by means of submodules from~II (only for
  C1).
\end{enumerate}

\subsection*{6}

I. $\langle \px,\ 2x\px-\py,\ x^2\px-x\py+\al e^{-2y}\rangle$.

II. Finite-dimensional invariant subspaces exist only for $\al=0$ and
have the form:
$$V_m=\langle x^ie^{my}\mid 0\le i \le m \rangle,\quad m\in \N\cup\0.$$

III. Since $\g$ is semisimple, the cohomology space $H^2(\gp,\Vp)$ is
isomorphic to $H^1(\g_0,V/V_0)$. Hence, this space is non-trivial, if
and only if the action of $\g_0$ to $V/V_0$ is trivial. In our
case
\begin{align*}
V&=V_{m_1}\oplus\dots\oplus V_{m_k},\quad 0\le m_1<\dots<m_k,\\
V_0&=\{f\in V\mid f(0,0)=0\}.
\end{align*}

It is easy to check that in all these cases $\g_0.V\subset
V_0$. Hence, the required cohomology space is always one-dimensional,
and the corresponding twisted Lie algebra of vector fields is
given in Subsection~\ref{ss:3c} under number~C1c.

\subsection*{7}

I. $\langle \px,\ x\px-y\py-\al/2,\ -x^2\px+(1+2xy)\py+\al x \rangle$.

II. Finite-dimensional subspaces exist only if $\al=n$, where
$n\in\N\cup\0$, and can be described in the following way:
$$V_{n,m}=\langle \tfrac{d^i}{dx^i}\big(x^{n+m}(1+xy)^m\big)\mid 0\le i\le
n+2m\rangle,\quad m\in\N\cup\0.$$

III. As in the previous case, the cohomology space $H^2(\gp,\Vp)$ is
non-trivial if and only if the action of $\g_0$ on $V/V_0$ is
trivial. This happens only for $n=0$, and in this case the cohomology
space is one-dimensional. The corresponding twisted pairs are~C2c.

\subsection*{8}

I. $\langle \px,\ 2x\px-\al,\ x^2\px-\al x,\ \py,\ 2y\py-\bt,\
y^2\py-\bt y\rangle$.

II. Finite-dimensional subspaces exist only if $\al,\bt\in
\N\cup\0$. In this case it is unique and has the form:
 $$ V=\langle x^iy^j\mid 0\le i \le \al,\ 0\le j \le \bt\rangle.$$

The cohomology spaces are non-trivial only in the case when
$\al=0$ or $\bt=0$. In cases $\alpha=0,\,\beta\ne0$ and
$\alpha\ne0\,\beta=0$ the cohomology space is one-dimensional, and the
corresponding twisted pair of Lie algebras has the form:
$$
\langle \px,\ x\px,\ x^2\px+x\pz,\ \py,\ y\py,\ y^2\py,\ \pz\rangle.
$$

If $\alpha=\beta=0$, then the cohomology space is two-dimensional, and
we get one more Lie algebra of vector fields~C3c.

\subsection*{9}

I. $\langle \px,\ \py,\ x\px+y\py-\frac{2\al}3,\ x\px-y\py, y\px,\ x\py,\
x\left(x\px+y\py-\al\right),\ y\left(x\px+y\py-\al\right)\rangle$.

II. Finite-dimensional submodules exist only if $\al\in\N\cup\0$. In
this case it is unique and has the form:
$$ V=\langle x^iy^j\mid 0\le i+j\le \al\rangle.$$

III. Since the Lie algebra $\g$ is semisimple, the required cohomology
space is isomorphic to $H^1(\g_0,V/V_0)$. This space is trivial for
$\al>0$ and one-dimensional for $\al=0$. The corresponding Lie
algebra of vector fields is C4c.

\subsection*{10}

I. $\langle \px,\ \py,\ x\px-y\py,\ y\px,\ x\py\rangle$.

II. All finite-dimensional submodules have the form:
$$ V_m=\langle x^iy^j\mid 0\le i+j\le m\rangle,\quad m\in\N\cup\0.$$

III. The cohomology space $H^2(\gp,\Vp)$ is non-trivial only for
$m=0$. In this case the corresponding Lie algebra of vector
fields is C5c.

\subsection*{11}

I. $\langle \px,\ \py,\ x\px-y\py,\ x\px+y\py-\al,\ y\px,\ x\py\rangle$.

II. All finite-dimensional submodules have the form:
$$ V_m=\langle x^iy^j\mid 0\le i+j\le m\rangle,\quad m\in\N\cup\0.$$

III. The cohomology space $H^2(\gp,\Vp)$ is non-zero if and only if
$m=0$ and $\al=2$. The corresponding Lie algebra
of vector fields is C6c.

\subsection*{12}

I. $\langle \px,\ \py,\ 2x\px+\al,\ x^2\px-x\py+\al x\rangle$.

II. Finite-dimensional submodules exist for any $\al$ and have the
form:
$$V_{\al,m}=\langle x^ie^{(m+\alpha)y}\mid 0\le i \le m \rangle,\quad m\in \N\cup\0.$$

III. Let
$$V=V_{\al, m_1}\oplus\dots\oplus V_{\al, m_k},\quad 0\le m_1<\dots<m_k.$$
The cohomology space $H^2(\gp,\Vp)$ is non-trivial
in the following two cases: $\alpha=0$, $m_1>0$ and $\alpha=2$. In both
cases the cohomology space is one-dimensional,
and the corresponding Lie algebras
of vector fields are C7c and C7d.

\subsection*{13, $\mathbf{n=0}$}

I. $\langle \px,\ 2x\px-m,\ x^2\px-mx,\ \py\rangle$.

II. Finite-dimensional submodules exist only if $m\in\N\cup\0$ and
have the form:
$$V_{p(y)}=\langle x^if(y)\mid 0\le i\le m,\ a_0f+a_1f'+\dots+a_kf^{(k)}=0
\rangle,$$ where $p(y)=a_0+a_1y+\dots+a_ky^k$, $a_k\ne0$.

The cohomology space $H^2(\gp,\Vp)$ is non-zero only in case
$m=0$. In this case it is one-dimensional, and the corresponding
Lie algebra of vector fields is C8c.

\subsection*{17, $\mathbf{n\ge1}$}

I. $\langle \px,\ 2x\px + ny\py-m,\ x^2\px+nxy\py-mx,\ \py,\ x\py,\
\dots,\ x^n\py\rangle$.

II. Finite-dimensional submodules exist only if $m\in\N\cup\0$ and
have the form:
$$V_{n,m,p}=\langle x^iy^j\mid 0\le j \le p,\ 0\le i \le m-jn\rangle, \quad
0\le p \le \left[\frac{m}{n}\right];$$

The cohomology space $H^2(\gp,\Vp)$ is non-trivial only in
following two cases: $m=0$ or $m=kn-2$. In these cases we have
$$\dim H^2(\gp,\Vp)=
\begin{cases} 0, & m\ne 0,kn-2,\ k\in\N;\\
              1, & m=0,\,n\ne1,2;\\
              1, & m=kn-2,\ p=k-2,\ k\ge2,\,n\ge1,\ (k,n)\ne(2,1);\\
              1, & m=kn-2,\ p=k-1,\ k\ge1,\,n\ge2,\ (k,n)\ne(1,2);\\
              2, & m=0,\,n=1,2.
\end{cases}
$$
The corresponding Lie algebras of vector fields are listed as C9c-C9g.

\subsection*{18}

I. $\langle \px,\ 2x\px-\al,\ y\py-\bt,\ x^2\px+nxy\py-(\al+n\bt)x,\
\py,\ x\py,\ \dots,\ x^n\py\rangle$.

II. Finite-dimensional submodules exist only if $\al+n\bt\in\N\cup\0$
and have the form:
$$V_p=\langle x^iy^j\mid 0\le j\le p,\ 0\le i\le
\al+n(\bt-j)\rangle,\quad 0\le np \le \al+n\bt.$$

III. The cohomology space $H^2(\gp,\Vp)$ is non-trivial only in
the following cases:
\begin{enumerate}[a)]
\item $\alpha=\beta=0$, $p=0$, $n\ge0$;
\item $\alpha=\beta=0$, $n=0$, $p>0$;
\item $\alpha=0$, $n=0$, $\beta=p+1$;
\item $\alpha=-2$, $\beta=k\ge2$, $p=k-2$;
\item $\alpha=-2$, $\beta=k\ge1$, $p=k-1$.
\end{enumerate}

In all these cases the cohomology space is one-dimensional, and
the corresponding Lie algebras of vector fields are C10c-C10g.

\section{Real case}
\label{s:real}

\subsection{Homogeneous spaces without one-dimensional invariant
  distribution}

Over the field of real numbers we have the following
primitive pairs of codimension~3:
\begin{tabbing}
:::::::::::::::\=:::::::::::::::::::::\=:::::::::::::::::::\kill
(1)  \> $\g=\sl(4,\R)$, $\g_0$ is a parabolic subalgebra of
codimension~3; \\[2mm]
(2)  \> $\g=\gl(3,\R)\rtt\R^3$, $\g_0=\gl(3,R)$; \\[2mm]
(3)  \> $\g=\sl(3,\R)\rtt\R^3$, $\g_0=\sl(3,R)$; \\[2mm]
(4a) \> $\g=\so(4,1)$, $\g_0$ is a parabolic subalgebra of codimension~3; \\[2mm]
(4b) \> $\g=\so(3,2)$, $\g_0$ is a parabolic subalgebra of codimension~3; \\[2mm]
(5a) \> $\g=\co(3)\rtt\R^3$, $\g=\co(3)$; \\[2mm]
(5b) \> $\g=\co(2,1)\rtt\R^3$, $\g=\co(2,1)$; \\[2mm]
(6a) \> $\g=\so(4)$, $\g_0=\so(3)$; \\[2mm]
(6b) \> $\g=\so(3,1)$, $\g_0=\so(3)$; \\[2mm]
(6c) \> $\g=\so(3,1)$, $\g_0=\so(2,1)$; \\[2mm]
(6d) \> $\g=\so(2,2)$, $\g_0=\so(2,1)$; \\[2mm]
(7a) \> $\g=\so(3)\rtt\R^3$, $\g_0=\so(3)$; \\[2mm]
(7b) \> $\g=\so(2,1)\rtt\R^3$, $\g_0=\so(2,1)$; \\[2mm]
(8)  \> $\g=\spg(4,\R)$, $\g_0$ is a parabolic subalgebra of
codimension~3; \\[2mm]
(13b')  \> $\g=\su(2,1)=\left\{\left(\left.
\begin{smallmatrix} z_1 & z_2 & ix \\ z_3 & \bar z_1-z_1 -\bar z_2 \\
iy & -\bar z_3 & -\bar z_1\end{smallmatrix}\right)\,\right|\,
z_1,z_2,z_3\in\C,\, x,y\in\R\right\}$, \\
\>\> $\g_0=\left\{\left(
\begin{smallmatrix} z_1 & z_2 & ix \\ 0 & \bar z_1-z_1 -\bar z_2 \\
0 & 0 & -\bar z_1\end{smallmatrix}\right)\right\}$, \\
\end{tabbing}
The pair $(13b')$ is equivalent over $\C$ to the pair $(13b)$ given in
Subsection~\ref{ss:3.2}.

We have three additional non-solvable Lie algebras of vector
fields on the real plane, which are listed
in~Subsection~\ref{ss:2} under numbers 7', 7'' and 8'. Notice that
all these Lie algebras are simple. Namely, Lie algebra~7' is
isomorphic to $\su(2)$, 7'' is isomorphic to $\sl(2,\R)$, and 8'
is isomorphic to $\sl(2,\C)_{\R}$.

The following lemma helps to describe all real imprimitive
effective pairs $(\g,\g_0)$ of codimension~3 such that the isotropy
$\g_0$-module $\g/\g_0$ does not have one-dimensional submodules.
\begin{lem}\label{lem:61}
  Let $(\g,\g_0)$ be an effective pair of Lie algebras of arbitrary
  codimension, let $\h$ be a subalgebra of codimension~1 in $\g$,
  containing $\g_0$, and let $\bg_0$ be the maximal ideal of $\h$ lying
  in $\g_0$. Suppose that the Lie algebra $\h/\bg_0$ is simple. Then
  the $\g_0$-module $\g/\g_0$ has a one-dimensional submodule
  complimentary to $\h/\g_0$.
\end{lem}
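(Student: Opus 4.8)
The plan is to exhibit the required one-dimensional submodule explicitly as the image in $\g/\g_0$ of an element $v\in\h$ whose action generates a trivial $\g_0$-submodule modulo $\g_0$. First I would observe that since $\codim_\g\h=1$, we have $\dim(\g/\h)=1$, and because $\h$ is a subalgebra, $\g/\h$ carries a natural one-dimensional $\h$-module structure (in particular a $\g_0$-module structure) via $x.(y+\h)=[x,y]+\h$. Dually, what we want inside $\g/\g_0$ is a one-dimensional $\g_0$-submodule $W$ with $W\oplus(\h/\g_0)=\g/\g_0$; equivalently, a $\g_0$-invariant complement to the hyperplane $\h/\g_0$. So the task is to split the short exact sequence of $\g_0$-modules
\[
\0\to\h/\g_0\to\g/\g_0\to\g/\h\to\0.
\]

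Next I would pass to the algebra $\h/\bg_0$, which by hypothesis is simple. Since $\bg_0$ is an ideal of $\h$ contained in $\g_0$, the quotient $\g_0/\bg_0$ is a subalgebra of $\h/\bg_0$, and the sequence above is the inflation to $\g_0$ of the corresponding sequence of $\g_0/\bg_0$-modules obtained from $\h/\bg_0$ acting on $\h/\g_0$ (note $\bg_0$ acts trivially on $\h/\g_0$ because $\bg_0\subset\g_0$ and $\bg_0$ is an ideal of $\h$, so $[\bg_0,\h]\subset\bg_0\subset\g_0$). The key point is that $\h/\g_0$, as an $\h/\bg_0$-module, is the adjoint-type module $(\h/\bg_0)/(\g_0/\bg_0)$, and one can use simplicity of $\h/\bg_0$ to control its $\g_0/\bg_0$-submodule structure — in particular, the relevant obstruction lives in a cohomology group that vanishes. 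Concretely, I expect that $\bg_0$ being the \emph{maximal} ideal of $\h$ inside $\g_0$ forces the pair $(\h/\bg_0,\g_0/\bg_0)$ to be effective, and then simplicity of $\h/\bg_0$ gives $H^1(\g_0/\bg_0,\h/\g_0)=0$ by the usual argument (as in the Corollary to Theorem~\ref{ext:thm}, where semisimplicity of the ambient algebra kills $H^1$), so the extension splits $\g_0$-equivariantly. I would therefore pick an element $v\in\h$ whose class spans the splitting line and set $W=\langle v+\g_0\rangle\subset\g/\g_0$; by construction $W$ is $\g_0$-stable and complementary to $\h/\g_0$.

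Finally I would verify $W$ is one-dimensional (immediate) and genuinely a submodule: for $x\in\g_0$, $x.(v+\g_0)=[x,v]+\g_0$ lies in $W$ precisely because $v$ was chosen in the $\g_0$-invariant complement. I would also need to check that $W$ is non-trivial, i.e.\ not contained in $\h/\g_0$, which holds since $v\notin\h$ would be wrong — rather $v\in\h\setminus\g_0$ with $v+\g_0$ chosen to project nontrivially to $\g/\h$; one subtlety is that the splitting line could a priori land inside $\h/\g_0$ if $\g/\h$ were a trivial summand already present, but since $\h/\g_0$ and $\g/\h$ are distinct irreducible constituents here (the former nontrivial over the simple algebra, the latter at most one-dimensional hence built from a character), no such coincidence obstructs the complement. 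The main obstacle I anticipate is the cohomological step: making precise why $H^1(\g_0/\bg_0,\h/\g_0)$ vanishes, since $\g_0/\bg_0$ need not be semisimple — one must instead exploit that $\h/\bg_0$ is simple together with the specific structure of the module $\h/\g_0$ (a codimension-one quotient of the adjoint module), possibly via a Levi-type argument or a direct check that any $\g_0/\bg_0$-invariant functional on $\h/\bg_0$ vanishing on $\g_0/\bg_0$ extends the splitting.
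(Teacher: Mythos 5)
Your plan correctly reformulates the problem as splitting the short exact sequence of $\g_0$-modules $\0\to\h/\g_0\to\g/\g_0\to\g/\h\to\0$, but the decisive step --- the vanishing of the obstruction group, which is $H^1\bigl(\g_0,\Hom(\g/\h,\h/\g_0)\bigr)$ --- is asserted rather than proved, and you yourself flag it as the main obstacle without resolving it. The appeal to the Corollary to Theorem~\ref{ext:thm} does not apply: there Whitehead's lemma is invoked for the \emph{semisimple ambient algebra} $\g$, whereas here one would need first-cohomology vanishing for the isotropy algebra $\g_0$ (or $\g_0/\bg_0$), which is not semisimple in general --- for instance it may well be a Borel or parabolic subalgebra of $\h/\bg_0$, for which $H^1$ with coefficients in a quotient of the adjoint module has no reason to vanish on general grounds. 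There is also a technical error in the proposed reduction: $\bg_0$ is an ideal of $\h$, not of $\g$, so while it does act trivially on $\h/\g_0$, it need not act trivially on $\g/\h$ or on $\g/\g_0$; hence the sequence is not simply the inflation of a sequence of $\g_0/\bg_0$-modules, and the character by which $\g_0$ acts on $\g/\h$ cannot be dropped from the coefficient module.

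The paper's proof shows that no soft vanishing argument is being used, and suggests none is available: it takes, by the Levi theorem, a \emph{simple} subalgebra $\ag\subset\h$ complementary to $\bg_0$, uses complete reducibility of $\ag$-modules to find a one-dimensional trivial $\ag$-submodule $V=\langle v\rangle$ complementary to $\h$ in $\g$, and then proves directly that $[v,\bg_0]\subset V+\bg_0$ (which yields that $V+\g_0$ is the desired invariant complement). That last inclusion is the real content, and it is established by introducing the descending chain $\bg_i=\{x\in\bg_{i-1}\mid [v,x]\in V+\bg_{i-1}\}$ of $\ag$-submodules and deriving a contradiction from the assumption that it does not stabilize at $i=0$, via an explicit analysis of the image of $\ad v$. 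Your outline contains no substitute for this argument, so as it stands the proof is incomplete at its central point.
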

\begin{proof}
  Let $\ag$ be a simple subalgebra in $\h$, complimentary to $\bg_0$.
  This subalgebra always exists by the Levy theorem.  Since $\ag$ is
  simple, the $\ag$-module $\g$ is also simple and there exists a
  one-dimensional trivial submodule $V\subset\g$ complimentary to
  $\h$.

  Let $v$ be a non-zero vector in $V$. Then, clearly, we have $\ad
  v(V+\ag)=\{0\}$. Define the decreasing sequence of subspaces in $\g$ as
  follows:
  \[
  \bg_i=\{x\in \bg_{i-1}\mid [v,x]\in V+\bg_{i-1}\quad\text{for all }i\ge1.
  \]
  Let us prove by induction by $i$ that each subspace $\bg_i$ is a
  submodule of the $\ag$-module $\bg_0$.  Indeed, for $i=0$ we have
  nothing to prove. Suppose that $i\ge1$ and $\bg_{i-1}$ is a submodule
  of the $\ag$-module $\bg_0$. Then for any $x\in\ag$, $y\in\bg_i$ we have
  $[x,y]\in\bg_{i-1}$ and
  \[
  \ad v([x,y])=[x,\ad v(y)]\subset [x,V+\bg_{i-1}]\subset \bg_{i-1}.
  \]
  Hence, $[x,y]\in \bg_i$ and $\bg_i$ is stable with respect to the
  action of $\ag$.

  Let $n$ be the smallest integer such that $\bg_n=\bg_{n+1}$.
  If $n=0$, we get
  \[
  [V,\g_0]\subset [V,\bg_0]\subset V+\bg_0\subset V+\g_0.
  \]
  This means that $V+\g_0$ is a one-dimensional $\g_0$-submodule of
  $\g/\g_0$ complementary to $\h/\g_0$.

  Suppose that $n\ge1$.  Since
  the $\ag$-module $\bg_{n-1}$ is semisimple, we see that
  there exists a submodule $W_{n-1}\subset \bg_{n-1}$ complementary to
  $\bg_n$. Define $W_i=\ad v(W_{i+1})$ for all $i=0,\dots,n-2$. Since
  $\ad v$ is a homomorphism of $\ag$-modules we can easily show that
  $\bg_i=W_i\oplus \bg_{i+1}$ for all $i=0,\dots,n-1$ (direct sum of
  $\ag$-modules) and $\ad v(W_0)=\ag$. Let us define for convenience
  $W_{-1}$ to be $\ag$.

  Finally, by induction by $i$ we can prove that $\ad
  v([W_0,W_i])=W_{i}$ which for $i=n-1$ will take form $\ad v
  (W_0,W_{n-1})=W_{n-1}$. But this is impossible, since
  \[
  \ad v(\g)=\ad v(V\oplus W_{-1}\oplus W_0\oplus\dots\oplus
  W_{n-1}\oplus\bg_n)\subset V\oplus W_{-1}\oplus W_0\oplus\dots\oplus
  W_{n-2}\oplus\bg_n,
  \]
  so that $W_{n-1}$ does not belong to the image of $\ad v$.
\end{proof}
Suppose now that $(\g,\g_0)$ is a real imprimitive effective pair
of codimension 3 such that the isotropy $\g_0$-module $\g/\g_0$
does not contain one-dimensional submodules. Nevertheless, since
the pair $(\g,\g_0)$ is not primitive, the exist a subalgebra
$\h\subset\g$ containing $\g_0$ such that $\codim_{\g}\h=1$. Let
$\bg_0$ is the maximal ideal of $\h$ that lies in $\g_0$. Then the
pair $(\h/\bg_0,\g_0/\bg_0)$ is effective and primitive. Moreover,
in view of Lemma~\ref{lem:61} the Lie algebra $\h/\bg_0$ can not
be simple. Hence, the pair $(\h/\bg_0,\g_0/\bg_0)$ corresponds to
one of the two Lie algebras of vector fields on the plane under
numbers $10$ or $11$. These two cases were considered over $\C$ by
Sophus Lie in~\cite{lie1}. Tracing his consideration of these two
cases, we see that it goes without any changes over the field of
real numbers. Hence, over $\R$ we get the same list of Lie
algebras on $\R^3$, which are given in Subsection~\ref{ss:3a}.

\subsection{Homogeneous spaces with one-dimensional invariant
  distribution}

Below we describe additional Lie algebras of vector fields on the
plane with an invariant one-dimensional distribution, that appear
if we apply methods of Subsection~\ref{ss:3.3} to real Lie
algebras 7', 7'', and 8'.

\subsubsection{Subalgebras of codimension 1 in stationary subalgebras}

Let $g$ be one of the Lie algebras 7', 7'', or 8', and let $\tilde
g$ be its stationary subalgebra. As in Subsection~\ref{ss:3.2}, we
list now all subalgebras $\g_0$ in $\h_0$ of codimension $1$, such
that there are no any ideals $\ag$ in $\g$, which satisfy
$\codim_{\ag+\g_0}\g_0=1$. All subalgebras with this property have
the form:
\begin{itemize}
\item[7':] $\g=\mathfrak{su}(2)$, $\g_0=\0$;
\item[7'':] see [6] and [7] in Subsection~\ref{ss:3.2}.
\item[8':] $\g=\sl(2,\C)_{\mathbb R}$, $\g_0=
\left\{\left(\begin{smallmatrix} e^{i\al} x & z \\ 0 & -e^{i\al}x
    \end{smallmatrix}\right)\mid x\in\R,z\in\C\right\}$ ($\al\sim
    -\al$).
\end{itemize}

The corresponding Lie algebras of vector fields are B1' and B2'
from Subsection~\ref{ss:3r}.

\subsubsection{Extensions of real pairs}
Below we consider extensions of the pairs 7', 7'', and 8' in the
same way as in Subsection~\ref{ss:3.3}.

\subsection*{7'}

I. $\langle x\py-y\px+\alpha,\ (1+x^2-y^2)\px+2xy\py+2\alpha y,\
  2xy\px+(1-x^2+y^2)\py-2\alpha x\rangle$.

II. Finite-dimensional submodules exist only if $\alpha=0$. In
this case for each $n\in\N\cup\0$ there exists a unique submodule
$V_n$ of dimension $2n+1$, which is generated (as a submodule) by
$P_n(\frac{x^2+y^2-1}{x^2+y^2+1})$, where $P_n(z)$ is the $n$-th
Legendre polynomial:
\[
P_n(z)=\frac{(-1)^n}{2^n n!}\frac{d^n}{dz^n}(1-z^2)^n.
\]

III. The cohomology space $H^2(\gp,\Vp)$ is always
one-dimensional, and the corresponding Lie algebras of vector
fields are C2c' (with sign $+$).

\begin{rem}\label{sphere}
 This Lie algebra of vector fields corresponds to the
  classical action of the Lie group $SO(3)$ on $S^2$. In spherical
  coordinates it has the form:
$$\left\langle \py,\ \sin(y)\px+\cot(x)\cos(y)\py,\
  \cos(y)\px-\cot(x)\sin(y)\py\right\rangle.$$
Then the modules $V_n$ can be written explicitly in terms of
adjoint Legendre functions:
$$P_l^m(z)=\frac{(-1)^{m+l}(1-z^2)^{m/2}}{2^ll!}
\frac{d^{m+l}}{dz^{m+l}}(1-z^2)^l.$$
Namely, we have
$$V_n=\langle P_n(\cos(x)),\ \sin(ky)P^k_n(\cos(x)),
\ \cos(ky)P^k_n(\cos(x)) \mid k=1,\dots,n\rangle.$$
\end{rem}

\subsection*{7''}

I. $\langle x\py-y\px+\alpha,\ (1-x^2+y^2)\px-2xy\py-2\alpha y,\
  -2xy\px+(1+x^2-y^2)\py+2\alpha x\rangle$.

II. Finite-dimensional submodules exist only if $\alpha=0$. In this
case for each $n\in\N\cup\0$ there exists a unique submodule $V_n$ of
dimension $2n+1$, which is generated (as a submodule) by
$P_n(\frac{x^2+y^2+1}{x^2+y^2-1})$, where $P_n(z)$ is the $n$-th
Legendre polynomial.

III. The cohomology space $H^2(\gp,\Vp)$ is always
one-dimensional, and the corresponding Lie algebras of vector
fields are C2c' (with sign $-$).

\begin{rem} This Lie algebra of vector fields corresponds to the
  classical action of the Lie group $PSL(2,\R)$ on the
  hyperbolic plane $H^2$. As above, we could introduce an analog of
  spherical coordinates on $H^2$ and would get a representation of
  submodules $V_n$ in terms of adjoint Legendre functions. (We would
  need only to replace $\cot(x)$ to $\coth(x)$ in Remark~\ref{sphere}.)
  Instead of that we give an explicit representation of submodules
  $V_n$ in Poincare model of $H^2$. In this model $H^2$ is identified
  with the set of all complex numbers $z=x+iy$ with positive imaginary part,
  and $PSL(2,\R)$ acts by projective transformations on
  $z$. The corresponding Lie algebra of vector fields has the form:
$$\left\langle \px,\ x\px+y\py,\ (x^2-y^2)\px+2xy\py\right\rangle.$$
Then the modules $V_n$ take the form:
$$V_n=\langle \tfrac{\partial^k}{\partial x^k}
\tfrac{(x^2+y^2)^n}{y^n} \mid k=0,\dots,2n\rangle.$$
\end{rem}

\subsection*{8'}

I. $\langle \px,\ \py,\ x\px+y\py-\alpha,\ x\py-y\px-\beta,\
(x^2-y^2)\px+2xy\py-2(\alpha x-\beta y),\ -2xy\px+(x^2-y^2)\py+
2(\alpha y+\beta x) \rangle$.

II. Finite-dimensional submodules exist only if
$\alpha\in\N\cup\0$ and $\beta=0$. In this case it is unique and
has the form:
\[
V=\{ x^iy^j(x^2+y^2)^k\mid i+j+k\le \alpha\}.
\]

III. The cohomology space $H^2(\gp,\Vp)$ is non-trivial only if
$\alpha=0$. In this case it is two-dimensional, and the
corresponding Lie algebra is given by C3c'.

\section{Computation of cohomology in case 13}

We can assume that
\begin{align*}
  \sl(2,\C)&=\langle \px, 2x\px+ny\py+mz\pz, x^2\px+nxy\py+mxz\pz
  \rangle,\\
  V&=\langle \py, x\py, \dots, \frac{x^n}{n!}\py \rangle,\\
  W_p&=\langle y^p\pz, xy^p\pz,\dots, \frac{x^{m-np}y^p}{(m-np)!}\pz \rangle,
\end{align*}
where $1\le p\le [m/n]$.

Let us describe all mappings
\[
\alpha\colon V\wedge V\to W_p,
\]
such that the space
\[
\sl(2,\C)\times V\times (W_p\times W_{p-1}\times \dots \times W_0)
\]
with the multiplication $[v_1,v_2]=\alpha(v_1,v_2)$ for all
$v_1,v_2\in V$ and natural multiplication on all other summands will
be a Lie algebra.

This implies necessary and sufficient conditions on $\alpha$:
\begin{enumerate}
\item $\alpha$ is an $\sl(2,\C)$-invariant mapping;
\item $\{[v_1,[v_2,v_3]]\}=0$ for all $v_1,v_2,v_3\in V$, that is
  $\alpha$ is a 2-cocycle of the $V$-module $W_p\times \dots W_0$.
\end{enumerate}

Assume that
\[
\alpha\left(\frac{x^i}{i!}\py,\frac{x^j}{j!}\py\right)=\sum_{k=0}^{m-pn}a^{ij}_k
\frac{x^ky^p}{k!}\pz.
\]
Then from the invariance of $\alpha$ with respect to the action of
$2x\px+ny\py+mz\pz\in \sl(2,\C)$ we get that
\[
\left(k-i-j-\frac{m-(n+2)p}2\right)a^{ij}_k=0.
\]
Hence, if $k\ne i+j+\frac{m-(n+2)p}2$, then $a^{ij}_k=0$, or, what is
the same,
\[
\alpha\left(\frac{x^i}{i!}\py,\frac{x^j}{j!}\py\right)=a^{ij}_k
\frac{x^ky^p}{k!}\pz, \quad\text{where }k=i+j+\frac{m-(n+2)p}2.
\]

From the Jacobi identity on elements $\py$, $\frac{x^i}{i!}\py$, and
$\frac{x^j}{j!}\py$ we have
\begin{gather*}
  \left[\py,\alpha\left(\frac{x^i}{i!}\py,\frac{x^j}{j!}\py\right)\,\right]=
\left[\frac{x^i}{i!}\py,\alpha\left(\py,\frac{x^j}{j!}\py\right)\,\right]-
\left[\frac{x^j}{j!}\py,\alpha\left(\py,\frac{x^i}{i!}\py\right)\,\right],\\
\intertext{or explicitly,}
p\sum_{k=0}^{m-pn}a^{ij}_k\frac{x^ky^{p-1}}{k!}\pz
=\frac{p}{i!}\sum_{k=0}^{m-pn} a^{0j}_k
\frac{x^{k+i}y^{p-1}}{k!}\pz- \frac{p}{j!}\sum_{k=0}^{m-pn}
a^{0i}_k \frac{x^{k+j}y^{p-1}}{k!}\pz.
\end{gather*}
This implies that
\begin{equation}\label{eqa:1}
a^{ij}_k=C^i_k a^{0j}_{k-i}- C^j_k a^{0i}_{k-j},
\end{equation}
where we assume that $a^{ij}_k=0$ for negative $k$.

From the invariance of $\alpha$ with respect to $\px\in\sl(2,\C)$
we see that
\[
\left[\px, \alpha\left(\py,\frac{x^i}{i!}\py\right)\,\right]=
\alpha\left(\py,\frac{x^{i-1}}{(i-1)!}\py\right), \quad \text{for all }1\le
i\le n.
\]
Therefore,
\begin{gather*}
\sum_{k=1}^{m-pn} a^{0i}_k \frac{x^{k-1}y^p}{(k-1)!}\pz =
\sum_{k=0}^{m-pn} a^{0,i-1}_k \frac{x^ky^p}{k!}\pz,\\
\intertext{or}
\sum_{k=1}^{m-pn-1} a^{0i}_{k+1} \frac{x^ky^p}{k!}\pz =
\sum_{k=0}^{m-pn} a^{0,i-1}_k \frac{x^ky^p}{k!}\pz.
\end{gather*}
Since $a^{0,0}_k=0$ for all $k$, we obtain
\begin{align*}
  & a^{0,1}_k=0\quad \text{for all }1\le k\le m-np,\\
  & a^{0,i}_{m-pn}=0\quad \text{for all }0\le i\le n-1,\\
  & a^{0,i}_{k+1}=a^{0,i-1}_k\quad \text{for all }1\le i\le n,\ 0\le k\le m-np-1.
\end{align*}
In particular, this implies that
\begin{equation*}
  a^{0,i}_k=
  \begin{cases}
    0 & \text{for }k>i,\\
    a^{0,i-k}_0 & \text{for }k<i,
  \end{cases}
\end{equation*}
and the equality~\eqref{eqa:1} takes the form
\begin{equation*}
  a^{i,j}_k=
  \begin{cases}
    0 & \text{for }k\ge i+j,\\
    (C^i_k-C^j_k) a^{0,i+j-k}_0 & \text{for }k<i+j.
  \end{cases}
\end{equation*}

Finally, from the invariance of $\alpha$ with respect to
$x^2\px+nxy\py+mxz\pz\in\sl(2,\C)$ we get
\[
\left[x^2\px+nxy\py+mxz\pz,\alpha\left(\py,x\py\right)\right]=
\alpha\left(\py, (1-n)x^2\py\right).
\]
Let $k=1+\frac{m-(n+2)p}2$. Then this equality takes the form:
\begin{multline*}
\left[x^2\px+nxy\py+mxz\pz, (C^0_k-C^1_k) a^{0,1-k}_0
  \frac{x^ky^p}{k!}\pz \right]=\\
2(1-n)(C^0_{k+1}-C^2_{k+1})a^{0,1-k}_0 \frac{x^{k+1}y^p}{(k+1)!}\pz.
\end{multline*}
If $a^{0,1-k}_0=0$ then it is easy to see that the mapping $\alpha$ is
trivial. Assume that $a^{0,1-k}_0\ne 0$. Then
\[
(k+1)(1-k)(k+np-m)=2(1-n)(1-k(k+1)/2).
\]
Let $x=\frac{m-np}2$. Then $k=1-n+x$ and after simple manipulation we
obtain the following second order equation on $x$:
\[
(2-n+x)(1-n-x)=(1-n)(3-n+x).
\]
This equation has two roots $x=-1$ and $x=n-1$. The first solution
does not have sense, since $\dim W=m-np+1=2x+1>0$. Therefore, non-zero
mapping $\alpha$ may exist only if $m-pn=2n-2$. In this case we can
put $a^{0,1}_0=1$ and obtain
\[
\alpha\left(\frac{x^i}{i!}\py, \frac{x^j}{j!}\py\right)
=(C^i_{i+j-1}-C^j_{i+j-1})\frac{x^ky^p}{k!}\pz.
\]

\section{One class of nilpotent transformation groups}
\label{s:ntg}

Let $\g$ be an arbitrary finite-dimensional (real or complex) Lie algebra. Then the pair $(\g,\0)$ is  obviously effective. Consider an effective $(\g,\0)$-module $(V,V_0)$ of codimension 1. This means that $V$ is an arbitrary $\g$-module, $V_0$ is any subspace of codimension 1 in $V$, and $V_0$ contains no non-zero submodules of the $\g$-module $V$. From Theorem~\ref{mod:c1} it follows that these modules are in one-to-one correspondence with left ideals $J$ of finite codimension in the universal enveloping algebra $U(\g)$. If $J$ is an ideal of this kind, then $V=(U(\g)/J)^*$ and $V_0$ is the kernel of the element $1+J$ viewed as a linear form on $V$. So, we can associate the effective pair 
$(\g\rightthreetimes V,V_0)$ to each left ideal $J$ of finite codimension in $U(\g)$, whose codimesion is one greater then the dimension of $\g$.

The corresponding homogeneous space can be locally described in the following way. Let $G$ be a Lie group whose Lie algebra is isomorphic to $\g$. Then $J$ can be regarded as the set of left-invariant differential operators on $\g$. Hence it defines the following subspace $\cF$ in the algebra $C^{\inf}(G)$ of all smooth functions on $G$ (or, in the algebra $C^\omega(G)$ of all analytical funcrions for the complex case):
\[
\cF=\{f\in C^{\inf}(G)\mid Df=0 \text{ for all } D\in J\}.
\]
Since $J$ has finite codimension, it is easy to see that this subspace is finite-dimensional. Since $J$ is a left ideal in $U(\g)$, the space $\cF$ will be stabe under the natural action of $G$ on $C^{\inf}(G)$: $(g.f)(h)=f(g^{-1}h)$ for all $g,h\in G$. This supplies $\cF$ with the structure of a $G$-module. Define the action fo the Lie group $G\rightthreetimes\cF$ on the manifold $G\times\C$ as 
\[
 (g,f).(h,a)=(gh,f(gh)+a),\quad g,h\in G,\,f\in \cF, a\in\C.
\]
It is easy to see that this action is effective. If $J\ne U(\g)$, then $\cF\ne\0$ and $G\rightthreetimes\cF$ acts transitively on $G\times\C$. The stationary subgroup of this action at the point $(e,0)$ is $\{e\}\times\cF_0$ where $\cF_0=\{f\in\cF\mid f(e)=0\}$.  

So, we see that the propblem of description of all homogeneous spaces in dimension $n+1$ includes, in particular, the description of all left ideals of finite codimesion in $U(\g)$ for all $n$-dimensional Lie algebras $\g$. 

Consider, for example, the case when $\g=\C^n$ is a commutative Lie algebra. Then $U(\g)=\C[x_1,\dots,x_n]$, and one needs to describe all ideals of finite codimension in the algebra of all polinomials in $n$ variables. For $n=1$ this problem is straightforward, since all ideals are principal. But it is no longer the case for $n>1$, and the problem of describing the ideals of finite codimension becomes considerably more difficult. For example, it was investigated by Zarissky\cite{zar1} for the case $n=2$ in connection with singularities of plane curves, and also in \cite{mark}, where the description of all so called complete ideals in $\C[x_1,x_2]$ is presented. And one of the discrete invariants of these ideals is the set of finite sequences of positive rational numbers.

\appendix
\section{Summary of results}
\subsection{Notation}
\begin{itemize}
\item $p=\px$, $q=\py$, $r=\pz$;
\item $V_{p(t)}=\{f(x)\mid a_nf^{(n)}(x)+\dots+a_1f'(x)+a_0f(x)=0\}$,
  for any polynomial $p(t)=a_nt^n+\dots+a_1t+a_0$;
\item $V_{\alpha,m}=\langle x^ie^{(m-\alpha)}\mid 0\le i \le
  m\rangle$ and $V_m=V_{0,m}$;
\end{itemize}

\subsection{Transitive Lie algebras of vector fields on the plane}
\label{ss:2}
\mpar

\medskip
\emph{Solvable algebras}

1. $\langle p, f(x)q \mid f\in V_{p(t)}\rangle$, $\deg p\ge 1$,
 $p(t)\sim p(at)$ for all $a\in \C^*$;

2. $\langle p, yq, f(x)q \mid  f\in V_{p(t)}\rangle$,
$\deg p\ge 1$, $p(t)\sim p(at+b)$ for all $a\in \C^*$, $b\in \C$;

3. $\langle p, xp+\al yq, q, xq, \dots, x^nq\rangle$, $n\ge 0$; if
 $n=0$ then $\al\sim 1/\al$;

4. $\langle p, xp+(n+1)yq+x^{n+1}q, q, xq, \dots, x^nq\rangle$,
 $n\ge0$;

5. $\langle p, xp, yq, q, xq, \dots, x^nq\rangle$, $n\ge 0$.

\medskip
\emph{Non-solvable algebras}

6. $\langle p,2xp-q,x^2p-xq\rangle$.

7. $\langle p,xp-yq,x^2p-(1+2xy)q\rangle$.

8. $\langle p,q,xp,yq,x^2p,y^2q\rangle$.

9. $\langle p,q,xp,xq,yp,yq,x^2p+xyq,xyp+y^2q\rangle$.

10. $\langle p,q,xp-yq,xq,yp\rangle$.

11. $\langle p,q,xp,xq,yq,yp\rangle$.

12. $\langle p,q,xp,x^2p-xq\rangle$.

13. $\langle p,2xp+nyq,x^2p+nxyq,q,xq,\dots,x^nq\rangle$, $n\ge0$.

14. $\langle p,xp,yq,x^2p+nxyq,q,xq,\dots,x^nq\rangle$, $n\ge0$.

\emph{Additional non-solvable algebras in real case}

7'. $\langle xq-yp,(1+x^2-y^2)p+2xyq,2xyp+(1-x^2+y^2)q\rangle$.

7''. $\langle xq-yp,(1-x^2+y^2)p-2xyq,-2xyp+(1+x^2-y^2)q\rangle$.

8'. $\langle p,q,xp+yq,xq-yp,(x^2-y^2)p+2xyq,-2xyp+(x^2-y^2)q\rangle$.

\subsection{Primitive Lie algebras in space}
\label{ss:3p}
\mpar

P1. $\langle p,q,r,xp,xq,xr,yp,yq,yr,zp,zq,zr,
xU,yU,zU\rangle$, $U=xp+yq+zr$;

P2. $\langle p,q,r,xp,xq,xr,yp,yq,yr,zp,zq,zr\rangle$;

P3. $\langle p,q,r,xp-zr,xq,xr,yp,yq-zr,yr,zp,zq\rangle$;

P4. $\langle p,q,r,xq-yp,xr-zp,yr-zq,xp+yq+zr,
2xU-Sp, 2yU-Sq, 2zU-Sr\rangle$, $U=xp+yq+zr$, $S=x^2+y^2+z^2$;

P5. $\langle p,q,r,xq-yp,xr-zp,yr-zq,xp+yq+zr\rangle$;

P6. $\langle xq-yp,xr-zp,yr-zq, Sp+2xU, Sq+2yU, Sr+2zU\rangle$,
where $S=1-x^2-y^2-z^2$, $U=xp+yq+zr$;

P7. $\langle p,q,r,xq-yp,xr-zp,yr-zq\rangle$;

P8. $\langle 2p-yr, 2q+xr, r, xp-yq, xq, yp, xp+yq+2zr,
x(xp+zr)+(xy+2z)q, (xy-2z)p+y(yp+zr), z(xp+yq+zr)\rangle$.


\subsection{Imprimitive Lie algebras without one-dimensional invariant foliations}
\label{ss:3a}
\mpar

A1. $\langle p, yq-zr, yr, zq, f(x)q, f(x)r \mid f\in V(p)\rangle$;

A2. $\langle p, yq, yr, zq, zr, f(x)q, f(x)r \mid f\in V(p)\rangle$;

A3. $\langle p, xp+\al(yq+zr), yq-zr, yr, zq, x^ip, x^iq \mid i=0,\dots,n\rangle$;

A4. $\langle p, xp, yq, yr, zq, zr, x^ip, x^iq \mid i=0,\dots,n\rangle$;

A5. $\langle p, 2xp+n(yq+zr), x^2p + nx(yq+zr), yq-zr, yr, zq, x^ip, x^iq \mid i=0,\dots,n\rangle$;

A6. $\langle p, xp, x^2p + nx(yq+zr), yq, yr, zq, zr, x^ip, x^iq \mid i=0,\dots,n\rangle$;

\subsection{Lie algebras with one-dimensional invariant foliations of type B}
\label{ss:3b}
\mpar

B1. $\langle p, 2p-q, x^2p-xq+e^{-2y}r\rangle$;

B2. $\langle p, 2xp+r, x^2p+xr, q, 2yq-\al r, y^2q -\al
yr\rangle$, $\alpha\ne0$, $\alpha\sim1/\alpha$;

B3. $\langle p, q, r+xp+2yq, xp-yq, yp, xq, x^2p+xyq+xr,
xyp+y^2q+yr\rangle$;

B4. $\langle p, q, r-xq, 2xp+yq-zr, xp-yq-2zr, x^2p+xyq+(xz+y)r,
xyp+y^2q+z(y+xz)r, yp+z^2r\rangle$;

B5. $\langle p, q, xq, xp-yq, yp, xp+yq+r\rangle$;

B6. $\langle p, 2xp+nyq+(n-2)zr, x^2p+nxyq+\big((n-2)xz+ny\big)r,
q, xq+r, x^2q+2xr, \dots, x^nq+nx^{n-1}r\rangle$, $n\ge1$;

B7. $\langle p, xp+yq+r, x^2p+(x+2y)r, q, xq+r, x^2q+2xr\rangle$;

B8. $\langle p, xp-zr, yq+zr, x^2p+nxyq+\big((n-2)xz+ny\big)r, q,
xq+r, x^2q+2xr, \dots, x^nq+nx^{n-1}r\rangle$, $n\ge1$.

\subsection{Lie algebras with one-dimensional invariant foliations of types C1--C3}
\label{ss:3c}
\mpar

C1a. $\langle p,\ 2xp-q, x^2p-xq, f(x,y)r\mid f\in
V_{m_1}+\dots+V_{m_k}\rangle$, $0\le m_1 < \dots, m_k$, where
$V_m=\langle x^ie^{my}\mid 0\le i \le m \rangle$;

C1b. $\langle p, 2xp-q, x^2p-xq, zr, f(x,y)r\mid f\in
V_{m_1}+\dots+V_{m_k}\rangle$, $0\le m_1 < \dots, m_k$;

C1c. $\langle p, 2xp-q, x^2p-xq+e^{-2y}r,\ f(x,y)r \mid f\in
V_{m_1}\oplus\dots\oplus V_{m_k}\rangle$, $0\le m_1<\dots<m_k$;

C2a. $\langle p, 2xp-2yq+nzr, x^2p-(1+2xy)q+nxzr, f(x,y)r \mid
f\in V_{n,m_1}+\dots+V_{n,m_k}\rangle$, $n\ge0$, $0\le m_1 <
\dots, m_k$, where $V_{n,m}=\langle
\tfrac{d^i}{dx^i}\big(x^{n+m}(1+xy)^m\big)\mid 0\le i\le
n+2m\rangle$;

C2b. $\langle p, xp-yq, zr, x^2p-(1+2xy)q+nxzr, f(x,y)r \mid f\in
V_{n,m_1}+\dots+V_{n,m_k} \rangle$, $n\ge0$, $0\le m_1 < \dots,
m_k$;

C2c. $\langle p, xp-yq+r, x^2p-(1+2xy)q+2xr, f(x,y)r \mid f\in
V_{0,m_1}+\dots+V_{0,m_k}\rangle$, $0\le m_1 < \dots, m_k$;

C3a. $\langle p, 2xp+nzr, x^2p+nxz x, q, 2yq+mzr, y^2q+myzr,
x^iy^j\mid 0\le i \le n,\ 0\le j \le m\rangle$, $n,m\ge0$;

C3b. $\langle p, xp, x^2p+nxz x, q, yq, y^2q+myzr, zr, x^iy^j\mid
0\le i \le n,\ 0\le j \le m\rangle$, $n,m\ge0$;

C3c. $\langle p, xp, x^2p+xr, q, yq, y^2q+yr, r\rangle$;

C4a. $\langle p, q, xp+yq+(2n/3)zr, xp-yq, yp, xq, x(xp+yq+nzr),
y(xp+yq+nzr), x^iy^jr \mid 0\le i+j\le n, i,j\ge0\rangle$,
$n\ge0$;

C4b. $\langle p, q, xp, xq, yp, yq, x(xp+yq+nzr), y(xp+yq+nzr),
x^iy^jr \mid 0\le i+j\le n, i,j\ge0\rangle$, $n\ge0$;

C4c. $\langle p, q, xp, yp, xq, yq, x(xp+yq+r), y(xp+yq+r),
r\rangle$;

C5a. $\langle p, q, xp-yq, yp, xq, x^iy^jr\mid \mid 0\le i+j\le n,
i,j\ge0\rangle$, $n\ge0$;

C5b. $\langle p, q, xp-yq, yp, xq, zr, x^iy^jr\mid \mid 0\le
i+j\le n, i,j\ge0\rangle$, $n\ge0$;

C5c. $\langle p, q+2xr, xq+x^2r, yp+y^2r, xp-yq, r\rangle$;

C6a. $\langle p, q, xp-yq, xp+yq+\al zr, yp, xq, x^iy^jr\mid \mid
0\le i+j\le n, i,j\ge0\rangle$, $n\ge0$;

C6b. $\langle p, q, xp-yq, xp+yq, yp, xq, zr, x^iy^jr\mid \mid
0\le i+j\le n, i,j\ge0\rangle$, $n\ge0$;

C6c. $\langle p, q+2xr, xq+x^2r, yp+y^2r, xp-yp, xp+yq+2zr,
r\rangle$;

C7a. $\langle p, q, 2xp+\al zr, x^2p-xq+\al xzr, f(x,y)r \mid f\in
V_{\al,m_1}+\dots+V_{\al,m_k}\rangle$, $0\le m_1<\dots<m_k$, where
$V_{\al,m} = \langle x^ie^{(m-\al)y}\mid 0\le i \le m \rangle$;

C7b. $\langle p, q, xp, x^2p-xq+\al xzr, zr, f(x,y)r \mid f\in
V_{\al,m_1}+\dots+V_{\al,m_k}\rangle$, $0\le m_1<\dots<m_k$;

C7c. $\langle p, q, 2xp+r, x^2p-xq+xr, f(x,y)r \mid f\in
V_{0,m_1}+\dots+V_{0,m_k}\rangle$, $0<m_1<\dots m_k$;

C7d. $\langle p, q, xp-zr, x^2p-xq-(1+2xz)r, f(x,y)r \mid f\in
V_{2,m_1}+\dots+V_{2,m_k}\rangle$, $0\le m_1<\dots m_k$;

C8a. $\langle p, 2xp+mzr, x^2p+mxzr, q, x^if(y)r \mid 0\le i \le
m, f\in V_{p(t)}\rangle$, $m\ge0$,
$p(t)=t^k+a_{k-1}t^{k-1}+\dots+a_0$, where $V_{p(t)}=\{ f \mid
f^{(k)}+a_{k-1}f^{(k-1)}+\dots+a_0f=0 \}$;

C8b. $\langle p, xp, x^2p+mxzr, q, zr, x^if(y)r \mid 0\le i \le m,
f\in V_{p(t)}\rangle$;

C8c. $\langle p, 2xp+r, x^2p+xr, q, f(y)r \mid f(y)\in
V_{p(t)}\rangle$;

C9a. $\langle p, 2xp + nyq+mzr, x^2p+nxyq+mxzr, q, xq, \dots,
x^nq, x^iy^j r \mid 0\le j \le p,\ 0\le i \le m-jn\rangle$,
$n\ge1$, $m\ge0$, $0\le p \le \left[\frac{m}{n}\right]$;

C9b. $\langle p, 2xp + nyq, x^2p+nxyq+mxzr, q, xq, \dots, x^nq,
zr, x^iy^j r \mid 0\le j \le p,\ 0\le i \le m-jn\rangle$, $n\ge1$,
$m\ge0$, $0\le p \le \left[\frac{m}{n}\right]$;

C9c. $\langle p, 2xp+nyq, x^2p+nxyq+xr, q, xq, \dots, x^nq,
\pz\rangle$, $n\ge1$;

C9d. $\langle p, 2xp+nyq+(kn-2)zr,
x^2p+nxyq+\big((kn-2)xz+ny^k/k\big)r, q, xq+y^{k-1}r,
x^2q+2xy^{k-1}r, \dots, x^nq+nx^{n-1}y^{k-1}r, x^iy^jr \mid 0\le i
\le 2n-2,\ 0\le j \le k-2\rangle$, $n\ge1$, $k\ge2$;

C9e. $\langle p, 2xp+nyq+(kn-2)zr,
x^2p+nxyq+\big((kn-2)xz+ny^k/k\big)r, q, xq,\dots, x^{n-1}q,
x^nq+nx^{n-1}y^{k-1}r, x^iy^jr \mid 0\le i \le n-2,\ 0\le j \le
k-1\rangle$, $n\ge2$, $k\ge1$;

C9f. $\langle p, 2xp+yq, x^2p+xyq+(x+y^2/2)r, q, xq+yr, r\rangle$;

C9g. $\langle p, xp+yq, x^2p+2xyq+(x+y)r, q, xq, x^2q+xr,
r\rangle$;

C10a. $\langle p, 2xp+(m-n\bt)zr, yq+\bt zr, x^2p+nxyq+mxzr, q,
xq, \dots, x^nq, x^iy^jr \mid 0\le j\le k,\ 0\le i\le
m-nj\rangle$, $n,m\ge0$, $0\le k \le m/n$;

C10b. $\langle p, xp, yq, x^2p+nxyq+mxzr, q, xq, \dots, x^nq, zr,
x^iy^jr \mid 0\le j\le k,\ 0\le i\le m-nj\rangle$, $n,m\ge0$,
$0\le k \le m/n$;

C10c. $\langle p, xp, yq, x^2p+nxyq+xr, q, xq, \dots, x^nq,
r\rangle$, $n\ge0$;

C10d. $\langle p, xp, yq, x^2p+xr, q, r, yr,\dots, y^kr\rangle$,
$k\ge1$;

C10e. $\langle p, xp, yq+(k+1)zr+y^{k+1}r, x^2p, q, r, yr,\dots,
y^k\pz\rangle$, $k\ge0$;

C10f. $\langle p, xp-yq, yq+kzr,
x^2p+nxyq+\big((kn-2)xz+ny^k/k\big)r, q, xq+y^{k-1}r,
x^2q+2xy^{k-1}r, \dots, x^nq+nx^{n-1}y^{k-1}r, x^iy^jr \mid 0\le
i\le 2n-2,\ 0\le j \le k-2$, $n\ge1$, $k\ge2$;

C10g. $\langle p, xp-yq, yq+kzr,
x^2p+nxyq+\big((kn-2)xz+ny^k/k\big)r, q, xq, \dots, x^{n-1}q,\
x^nq+nx^{n-1}y^{k-1}r, x^iy^jr \mid 0\le i\le n-2,\ 0\le j \le
k-1$, $n\ge2$, $k\ge1$.

\subsection{Additional algebras in real case}
\label{ss:3r}
\mpar

P4'. $\langle p,q,r,xp-zr,xq+yr,yp+zq,xp+yq+zr,
2zU+Sp, 2yU-Sq, 2xU+Sr\rangle$, $U=xp+yq+zr$, $S=y^2-2xz$;

P5'. $\langle p,q,r,xp-zr,xq+yr,yp+zq,xp+yq+zr \rangle$;

P6'. $\langle xq-yp,xr-zp,yr-zq, Sp-2xU, Sq-2yU, Sr-2zU\rangle$,
where $S=1+x^2+y^2+z^2$, $U=xp+yq+zr$;

P6''. $\langle xp-zr,xq+yr,yp+zq, 2zU+Sp, 2yU-Sq, 2xU+Sr\rangle$, $U=xp+yq+zr$, $S=y^2-2xz+1$;

P6'''. $\langle xp-zr,xq+yr,yp+zq, 2zU+Sp, 2yU-Sq, 2xU+Sr\rangle$, $U=xp+yq+zr$, $S=y^2-2xz-1$;

P7'. $\langle p,q,r,xp-zr,xq+yr,yp+zq \rangle$;

B1'. $\langle xp-yq+r,(1+x^2-y^2)p+2xyq+2yr,\
2xyp+(1-x^2+y^2)q-2xr \rangle$;

B2'. $\langle p, q, xp+yq+\cos(\al)r, xq-yp+\sin(\al)r,
(x^2-y^2)p+2xyq+2(\cos(\al)x+\sin(\al)y)r, -2xyp+(x^2-y^2)q +
2(\sin(\al)x-\cos(\al)y)r \rangle$, $\al \sim -\al$;

B4'. (primitive over $\R$)

C2a'. $\langle xq-yp, (\pm1+x^2-y^2)p+2xyq, 2xyp+(\pm1-x^2+y^2)q,
f(x,y)r \mid f\in V_{n_1} + \dots + V_{n_k}\rangle$, $0\le n_1 < \dots < n_k$. Here $V_n$ is an $(2n+1)$ dimensional vector space generated by
$P_n(\frac{x^2+y^2\mp1}{x^2+y^2\pm1})$, where $P_n(z)$ is the $n$-th
Legendre polynomial:
\[
P_n(z)=\frac{(-1)^n}{2^n n!}\frac{d^n}{dz^n}(1-z^2)^n.
\]

C2b'. $\langle xq-yp, (\pm1+x^2-y^2)p+2xyq, 2xyp+(\pm1-x^2+y^2)q, zr
f(x,y)r \mid f\in V_{n_1} + \dots + V_{n_k}\rangle$, $0\le n_1 < \dots < n_k$;

C2c'. $\langle xq-yp+r, (\pm1+x^2-y^2)p+2xyq+2yr,  2xyp+(\pm1-x^2+y^2)q-2xr,
f(x,y)r\mid f\in V_{n_1}\oplus\dots\oplus V_{n_k}\rangle$, $0\le n_1<\dots<n_k$;

C3a'. $\langle p, q, xp+yq+nzr, xq-yp, (x^2-y^2)p+2xyq+2nxzr, -2xyp+(x^2-y^2)q-2nyzr, x^iy^j(x^2+y^2)^k \mid 0\le i+j+k \le n\rangle$;

C3b'. $\langle p, q, xp+yq, xq-yp, (x^2-y^2)p+2xyq+2nxzr, -2xyp+(x^2-y^2)q-2nyzr, zr, x^iy^j(x^2+y^2)^k \mid 0\le i+j+k \le n\rangle$;

C3c'. $\langle p, q, r, xp+yq, xq-yp, (x^2-y^2)p+2xyq+2(\cos(\al)x+\sin(\al)y)r, -2xyp+(x^2-y^2)q+
2(sin(\al)x-\cos(\al)y)r \rangle$, $\al\sim-\al$.

\subsection{Lie algebras with one-dimensional invariant foliations of type D}
\label{ss:3d}
\mpar

Lie algebras D1-D18 are obtained from the Lie algebras of vector fields
on the plane by adding the subalgebra $\langle r, zr, z^2r \rangle$.

\end{document}